\newcommand{\imod}[1]{\allowbreak\mkern4mu({\operator@font mod}\,\,#1)}
\def\hal{\unskip\nobreak\hfill\penalty50\hskip10pt\hbox{}\nobreak

\hfill\vrule height 5pt width 6pt depth 1pt\par\vskip 2mm}
\numberwithin{equation}{section}
\def\a{\alpha}
 \def\e{\epsilon}
  \def\D{\Delta}
 \def\c{\chi}
 \def\N{\mathbb N}
 \def\Z{\mathbb Z}
 \def\C{\mathbb C}
 \def\F{\mathbb F}
 \def\G{\Gamma}
 \def\la{\langle}
 \def\diam{\mathsf {diam}}
 \def\L{{\mathcal L}}
 \def\PSL{{\rm PSL}}
 \def\SL{{\rm SL}}
 \def\l{\lambda}
\def\D{\Delta}
 \def\bmax{\mathsf{b}}
 \def\dmin{\mathsf{d}}
 \def\St{\mathsf{St}}
 \def\SSS{\mathsf{S}}
 \def\AAA{\mathsf{A}}
 \def\Irr{\mathrm{Irr}}
 \def\Ind{\mathrm{Ind}}
 \def\Z{\mathbb{Z}}
 \def\GSS{G_{\mathrm {ss}}}
 \def\CB{\mathbf{C}}
 \def\ZB{\mathbf{Z}}
 \def\GC{\mathcal{G}}
 \def\SL{\mathrm {SL}}
 \def\GL{\mathrm {GL}}
 \def\PSL{\mathrm {PSL}}
 \def\GU{\mathrm {GU}}
 \def\SU{\mathrm {SU}}
  \def\PSU{\mathrm {PSU}}
 \def\MC{\mathcal {M}}
 \def\pf{\noindent {\bf Proof.$\;$ }}
\def\hal{\unskip\nobreak\hfil\penalty50\hskip10pt\hbox{}\nobreak
 \hfill\vrule height 5pt width 6pt depth 1pt\par\vskip 2mm}
\newtheorem{theorem}{Theorem}
\newtheorem*{conj*}{Conjecture}
\newtheorem{conj}[theorem]{Conjecture}
\newtheorem{corol}[theorem]{Corollary}
\newtheorem{thm}{Theorem}[section]
\newtheorem{prop}[thm]{Proposition}
\newtheorem{lem}[thm]{Lemma}
\theoremstyle{definition}
\begin{document}

 \author{Martin W. Liebeck}
\address{M.W. Liebeck, Department of Mathematics,
    Imperial College, London SW7 2BZ, UK}
\email{m.liebeck@imperial.ac.uk}

\author{Aner Shalev}
\address{A. Shalev, Institute of Mathematics, Hebrew University, Jerusalem 91904, Israel}
\email{shalev@math.huji.ac.il}

\author[P. H. Tiep]{Pham Huu Tiep}
\address{P.H. Tiep, Department of Mathematics, Rutgers University, Piscataway, NJ 08854, USA}
\email{tiep@math.rutgers.edu}

\title{On the diameters of McKay graphs for finite simple groups}

\maketitle

\begin{abstract}
Let $G$ be a finite group, and $\a$ a nontrivial  character of $G$. The McKay graph $\MC(G,\a)$ has the irreducible characters of $G$ as vertices, with an edge from $\c_1$ to $\c_2$ if $\c_2$ is a constituent of $\a\c_1$. We study the diameters of McKay graphs for simple groups $G$. For $G$ a group of Lie type, we show that for any $\a$, the diameter is bounded by a quadratic function of the rank, and obtain much stronger bounds for $G=\PSL_n(q)$ or $\PSU_n(q)$. We also bound the diameter for symmetric and alternating groups.
 \end{abstract}



\footnote{The second author acknowledges the support of ISF grant 686/17 and the Vinik chair of mathematics which he holds. The third author gratefully acknowledges the support of the
NSF (grant DMS-1840702) and the Joshua Barlaz Chair in Mathematics. The second and the third authors were also partially supported by BSF grant 2016072.
The authors also acknowledge the support of the National Science Foundation under Grant No. DMS-1440140 while they were in residence at the Mathematical Sciences Research Institute in Berkeley, California, during the Spring 2018 semester. }

\section{Introduction}

For a finite group $G$, and a (complex) character $\a$ of $G$, the {\it McKay graph} $\MC(G,\a)$ is defined to be the directed graph with vertex set ${\rm Irr}(G)$, there being an edge from $\c_1$ to $\c_2$ if and only if $\c_2$ is a constituent of $\a\c_1$.
The famous McKay correspondence \cite{M} shows that if $G$ is a finite subgroup of $\SU_2(\C)$ and $\a$ is the corresponding 2-dimensional character of $G$, then $\MC(G,\a)$ is an affine Dynkin diagram of type $A$, $D$ or $E$. The purpose of this  paper is to initiate the study of McKay graphs for simple groups, focussing particularly on their diameters.

By a classical result of Burnside and Brauer \cite{Br}, $\MC(G,\a)$ is connected if and only if $\a$ is faithful, and moreover in this case an upper bound for the diameter $\diam \MC(G,\a)$  is given by $N-1$, where $N$ is the number of distinct values of $\a$. (Indeed, in this case $\sum^{N-1}_{j=0}\a^j$
contains every irreducible character of $G$. Taking $\beta$ to be an irreducible constituent of $\bar\chi_1\chi_2$, we can find $0 \leq j \leq N-1$ such
that
$$0 < [\a^j,\beta]_G \leq [\a^j,\bar\chi_1\chi_2]_G = [\a^j\chi_1,\chi_2]_G,$$
i.e. a directed path of length $j$ connects $\chi_1$ to $\chi_2$.)

An obvious lower bound for $\diam \MC(G,\a)$ (when $\a(1) > 1$) is given by $\frac{\log \bmax(G)}{\log \a(1)}$, where $\bmax(G)$ is the largest degree of an irreducible character of $G$. One can do slightly better, by observing that if $d := \diam(M,\a)$, then
$$2\a(1)^d > \sum^d_{i=0}\a(1)^i \geq \sum_{\c \in \Irr(G)}\c(1) > \biggl( \sum_{\c \in \Irr(G)}\c(1)^2 \biggr)^{1/2} = |G|^{1/2}.$$
It follows that
\[
\diam \MC(G,\a) \ge \tfrac{1}{2} \frac{\log (|G|/4)}{\log \a(1)}.
\]
This bound is far from tight for many groups $G$. However, for finite simple groups we make the following conjecture.

\begin{conj}\label{diamconj} There is an absolute constant $C$ such that for any finite non-abelian simple group $G$, and any nontrivial irreducible character $\a$ of $G$,
\[
\diam \MC(G,\a) \le C\frac{\log |G|}{\log \a(1)}.
\]
\end{conj}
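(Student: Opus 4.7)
The plan is to reduce to the classification of finite simple groups: sporadic groups contribute only finitely many instances and are absorbed into the constant $C$, leaving the alternating groups $\AAA_n$ and the groups $G$ of Lie type as the serious cases. The underlying mechanism in both is an orthogonality calculation forcing $\a^n$ to contain every irreducible constituent as soon as $n\gtrsim \log|G|/\log\a(1)$; once this holds, taking $\chi=\bar\chi_1\chi_2$ in the bound and using Frobenius reciprocity gives a directed path of the desired length from $\chi_1$ to $\chi_2$ in $\MC(G,\a)$, exactly as in the Burnside--Brauer argument reproduced in the introduction.

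Concretely, I would start from
\[
 [\a^n,\chi]_G \;=\; \frac{\chi(1)\a(1)^n}{|G|} \;+\; \frac{1}{|G|}\sum_{g\ne 1}\a(g)^n\overline{\chi(g)},
\]
and try to establish a uniform character-ratio bound of the form $|\a(g)|\le \a(1)^{1-\e}$ for all $g\ne 1$, with $\e>0$ bounded away from $0$. A Cauchy--Schwarz estimate of the error sum (using $\sum_g|\chi(g)|^2=|G|$) then yields
\[
[\a^n,\chi]_G \;\ge\; \frac{\chi(1)\a(1)^n}{|G|}\Bigl(1-O\bigl(|G|^{3/2}\a(1)^{-n\e}\bigr)\Bigr),
\]
which is strictly positive once $n\ge \frac{3}{2\e}\cdot\frac{\log|G|}{\log\a(1)}$.

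For $G$ of Lie type the ratio bound should be supplied by the Gluck and Larsen--Shalev--Tiep character estimates, which give $|\a(g)|/\a(1)\le q^{-c\cdot\rank(g)}$ on non-central elements and, for characters $\a$ of large degree, absolute bounds of the form $|\a(g)|\le \a(1)^{1-\e_0}$. For $\AAA_n$ the analogous ingredient is the Larsen--Shalev bound on character ratios based on the Murnaghan--Nakayama rule. Where these tools are too weak on their own -- typically for $\a$ of small degree (order $q^{O(1)}$ for Lie type, or a near-hook character for $\AAA_n$) -- the orthogonality argument must be supplemented by a constructive argument controlling the support $N$ of $\a$ and invoking the authors' own rank-quadratic diameter bound from the abstract; this matches $\log|G|/\log\a(1)$ in order of magnitude in that regime.

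The main obstacle is the intermediate range of $\a(1)$, where neither of the two regimes is sharp: here $\a$ may vanish on large parts of $G$ (e.g.\ on unipotent classes when $\a$ is semisimple), producing atypical elements that dominate the Frobenius sum even though the ratio bound looks favorable. A careful stratification of $\Irr(G)$ by Harish--Chandra/Lusztig series, with the number of classes at each level controlled separately, should allow the character-ratio estimates to be applied in a piecewise fashion and combined into the uniform bound of Conjecture~\ref{diamconj}. Producing such a stratification with an $\e$ that is \emph{absolute} (independent of rank, of $q$, and of $n$) is the genuine core of the conjecture and, in my view, where the real work lies.
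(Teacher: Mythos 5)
The statement you were asked to prove is a \emph{conjecture}. The paper does not prove it in full generality; it establishes it only for simple groups of Lie type of bounded rank (Theorem \ref{bdd}) and for $\PSL_n^\e(q)$ with $q > g(n)$ (Theorem \ref{diam}), and explicitly states that it is open for other classical groups of unbounded rank and for $\AAA_n$. Your proposal, likewise, is an outline with acknowledged gaps, not a proof; to that extent you correctly sense that this is not a closed problem. But I want to flag two concrete ways in which the strategy you sketch diverges from (and is weaker than) what the paper actually does in the cases it can handle.

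First, the orthogonality scheme you write down -- aim for $[\a^n,\chi]_G>0$ for every $\chi\in\Irr(G)$ by means of a uniform ratio bound $|\a(g)|\le \a(1)^{1-\e}$ with $\e$ absolute -- cannot work as stated, and the difficulty is more fundamental than ``the intermediate range of $\a(1)$.'' Gluck's bound (Proposition \ref{gluck}) gives only $|\a(g)|/\a(1)\le 19/20$ uniformly, which translates into $\e \approx \log(20/19)/\log\a(1)\to 0$ as $\a(1)\to\infty$; so for small $q$ and high rank there is no absolute $\e$ at all. The paper circumvents this entirely by not trying to hit every $\chi$ directly. Instead it pivots on the Steinberg character: Proposition \ref{stsq} (from \cite{HSTZ}) says $\St^2$ (or $\St^3$ for $\PSU_n$, $n$ odd) contains all of $\Irr(G)$, so it suffices to show $[\a^l,\St]_G\ne 0$. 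Because $\St$ vanishes off semisimple elements and $\St(g)=\pm|\CB_G(g)|_p$ there, the Frobenius sum in Lemma \ref{stdiam} and Lemma \ref{est} runs only over $\GSS$, is weighted by the small quantity $|\CB_G(g)|_p$, and can be killed with the Gluck bound ($l\gtrsim r^2$) or, for $\PSL_n^\e(q)$, with the much sharper $s$-dependent bound of Theorem \ref{char} from \cite{BLST,TT}, which depends on the support $\nu(g)$ rather than being uniform over $g$. Your proposal contains neither the Steinberg pivot nor any use of support-dependent ratios; these are the two ideas that make the argument close in the cases the paper treats.

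Second, for $\AAA_n$ and $\SSS_n$ the paper does not use character-ratio estimates at all. Section \ref{altsec} is a purely combinatorial argument: restrict to Young subgroups $\SSS_{n-1}$, $\SSS_{n-2}\times\SSS_2$, $\SSS_{n-3}\times\SSS_3$, use branching and $\Ind^G_H(1_H)$ decompositions to show $\a^2$ always picks up one of $\chi^{(n-1,1)},\chi^{(n-2,2)},\chi^{(n-2,1^2)}$, and then use the Burnside--Brauer value count on $\chi^{(n-1,1)}$. This yields $\diam\le 4n-4$, which confirms the conjecture only for $\a$ of polynomial degree; the conjecture is genuinely open for $\AAA_n$. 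Your appeal to a Murnaghan--Nakayama character-ratio bound here does not correspond to anything in the paper and, as far as is currently known, does not suffice.

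In summary: you correctly identify the Burnside--Brauer/Frobenius reciprocity mechanism and that sporadics can be absorbed into $C$, but the missing idea on the Lie-type side is to target $\St$ (and then use $\St^2,\St^3\supseteq\Irr(G)$) rather than every $\chi$ simultaneously, and on the $\AAA_n$ side the paper's route is combinatorial, not analytic. And again: this is a conjecture, so comparing to ``the paper's proof'' only makes sense for the partial results it contains.
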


Note that \cite{LS01} gives the analogous bound for conjugacy classes: namely, for a nontrivial conjugacy class $S$ of a finite (non-abelian) simple group $G$, we have $\diam  \G(G,S) \le C\frac{\log |G|}{\log |S|}$, where $\G(G,S)$ is the Cayley graph of $G$ with respect to $S$.

In general, Conjecture \ref{diamconj} cannot hold for arbitrary faithful character of $G$. However, once it holds for
faithful irreducible characters, then it also holds for all faithful {\it multiplicity-free} characters, albeit with a different constant $C$.
To see this, note that Theorem 1.1 of \cite{LS05} implies that the number $r_m(G)$ of irreducible characters of degree $m$ of a non-abelian
finite simple group $G$ satisfies $r_m(G) = o(m^{1+\e})$ for any fixed $\e > 0$. This implies that $G$ has at most $m^c$ irreducible characters
of degree at most $m$, where $c$ is an absolute constant. Now let $\beta$ be a faithful multiplicity-free character of $G$, and let
$\a$ be an irreducible constituent of $\beta$ of maximal degree. Then $\beta(1) \le \a(1)^{c+1}$, so assuming Conjecture \ref{diamconj}
we obtain
\[
\diam \MC(G,\beta) \le \diam \MC(G,\a) \le C\frac{\log |G|}{\log \a(1)} \le C(c+1) \frac{\log |G|}{\log \beta(1)}.
\]

In this paper we prove Conjecture \ref{diamconj} for many families of simple groups of Lie type.

\begin{theorem}\label{bdd} There is an absolute constant $C$ such that $\diam \MC(G,\a) \le Cr^2$
for any finite simple group $G$ of Lie type of rank $r$ and any nontrivial irreducible character $\a$ of $G$.
Hence Conjecture \ref{diamconj} holds for simple groups of Lie type of bounded rank.
\end{theorem}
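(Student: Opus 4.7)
The plan is to prove the theorem by induction on the rank $r$ of $G$, using Harish-Chandra theory to reduce from $G$ to a proper Levi subgroup at each inductive step. The Burnside-Brauer bound from the introduction (diameter $\le N-1$, where $N$ is the number of distinct values of $\alpha$) is too weak in general: for instance, for $\alpha = \chi_\theta$ a principal series character of $\PSL_2(q)$ with $\theta$ of large order, $N$ grows with $q$, yet the desired bound is absolute. So a structural approach is needed.

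For the base case $r = 1$, the groups are $\PSL_2(q)$, ${}^2B_2(q)$, and ${}^2G_2(q)$. I would show $\diam \MC(G, \alpha) \le C_0$ for an absolute constant $C_0$ via direct analysis of the character tables. In each case, for any nontrivial irreducible $\alpha$, one establishes that some bounded tensor power $\alpha^k$ contains the Steinberg character $\St$ as a constituent, after which further tensor products with $\St$ exhaust all of $\Irr(G)$ in a bounded number of additional steps, using that $\St \otimes \psi$ has rich decomposition for any $\psi$.

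For the inductive step $r \ge 2$, fix a maximal proper parabolic $P = LU$ of $G$ with Levi factor $L$ of semisimple rank at most $r - 1$. The argument has two substeps. First, show that some power $\alpha^k$ with $k = O(r)$ contains an irreducible constituent $\gamma$ belonging to a nontrivial Harish-Chandra series coming from $L$: that is, $\gamma$ is a constituent of the Harish-Chandra induced character $R_L^G(\lambda)$ for some nontrivial $\lambda \in \Irr(L)$. Second, apply the inductive hypothesis $\diam \MC(L, \lambda) \le C(r-1)^2$ and transfer this bound to $\MC(G, \alpha)$, by simulating walks in $\MC(L, \lambda)$ via walks in $\MC(G, \alpha)$ with only bounded multiplicative overhead per step, together with an $O(r)$ cost to reach any Harish-Chandra series of $G$ from the given one. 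Combining, one obtains $\diam \MC(G, \alpha) \le O(r) + C(r-1)^2 \le Cr^2$ for a suitable absolute constant $C$.

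The main obstacle lies in the second substep: Harish-Chandra induction $R_L^G$ is not an algebra homomorphism with respect to tensor product, so walks in $\MC(L, \lambda)$ do not automatically correspond to walks in $\MC(G, \alpha)$. Analyzing the tensor product $\alpha \otimes R_L^G(\lambda)$ requires a Mackey-type decomposition, and controlling its Harish-Chandra constituents may require the structure of the endomorphism algebra $\mathrm{End}_{\mathbb{C}G}(R_L^G(\lambda))$, a Hecke-algebra generalization of the Iwahori-Hecke algebra. A plausible refinement is to focus on characters in the principal Harish-Chandra series (those appearing in $R_B^G(1)$ for a Borel $B$), where explicit Hecke algebra formulas are cleanest, and to argue separately that every irreducible character of $G$ is reachable from this principal series in $O(r)$ further steps via the Jordan decomposition of characters.
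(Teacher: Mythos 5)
Your proposal is organized around an induction on the rank via Harish-Chandra theory, but as you yourself observe in the final paragraph, the inductive step has a genuine gap that you do not close. The crucial missing piece is the ``simulation'' claim in the second substep: that a path $\lambda=\lambda_0\to\lambda_1\to\cdots\to\lambda_m$ in $\MC(L,\lambda)$ can be converted into a path of comparable length in $\MC(G,\a)$. Since Harish-Chandra induction $R_L^G$ does not commute with tensor products (there is no identity relating $R_L^G(\lambda\otimes\mu)$ to $R_L^G(\lambda)\otimes R_L^G(\mu)$), knowing that $\lambda_{i+1}$ is a constituent of $\lambda\lambda_i$ in $\Irr(L)$ gives you no control over whether any constituent of $\a\gamma_i$ lies in the Harish-Chandra series containing $R_L^G(\lambda_{i+1})$. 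Your proposed refinement -- restricting to the principal series and invoking Hecke-algebra formulas and Jordan decomposition -- is a programme rather than an argument: neither the claimed $O(r)$ cost of reaching a nontrivial series from a given $\a$, nor the claimed $O(r)$ cost of reaching \emph{all} of $\Irr(G)$ from the principal series, is established. The latter is especially problematic for cuspidal characters (including cuspidal unipotent ones), which by definition do not occur in $R_L^G$ of anything for $L$ a proper Levi.

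The paper's argument sidesteps all of this machinery. It rests on two inputs: Proposition~\ref{stsq} (the Steinberg character satisfies $[\St^2,\chi]>0$ for every $\chi\in\Irr(G)$, with $\St^3$ covering the one exceptional family of odd-dimensional unitary groups), and Proposition~\ref{gluck} (Gluck's uniform character ratio bound $|\chi(g)|/\chi(1)\le\min(3/\sqrt{q},19/20)$ for $1\neq g\in G$). Lemma~\ref{stdiam} combines these with the explicit values of $\St$ in~\eqref{stval} to show $[\a^l,\St]_G\neq 0$ once $l\ge Dr^2$ for an absolute $D$, essentially because $(19/20)^l$ beats $q^{4r^2}>|G|$ once $l$ is of that size, so the $g=1$ term dominates the inner-product sum in~\eqref{useag}. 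Then $\a^{3Dr^2}$ contains $\St^3$ and hence all of $\Irr(G)$, which gives the diameter bound directly. Your instinct at the base case -- that once $\St$ is reached, tensoring with it exhausts $\Irr(G)$ quickly -- is exactly the right idea; the paper applies it uniformly in all ranks rather than only at $r=1$, which is what removes the need for any inductive machinery.
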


Our proof of Theorem \ref{bdd} shows that in fact one can take $C = 489$.

For classical groups of unbounded rank, we are able to handle the projective special linear and unitary groups $\PSL_n^\e (q)$ (with
$\PSL^+ = \PSL$ and $\PSL^- = \PSU$), where $q$ is large compared to $n$. The proof uses major new advances in the theory of character ratios, taken from \cite{BLST, TT}.

\begin{theorem}\label{diam}  There exist an absolute constant $C$ and  a function $g:\N\to \N$ such that the following holds.
If $G=\PSL_n^\e (q)$ with $n \geq 2$, $\e = \pm$, and $q>g(n)$, then
\[
\diam \MC(G,\a) < C\frac{\log |G|}{\log \a(1)},
\]
for all nontrivial irreducible characters  $\a$ of $G$.
\end{theorem}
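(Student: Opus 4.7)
The plan is to reduce the problem to showing that $\alpha^k$ contains every $\chi\in\Irr(G)$ as a constituent for some $k\le C\log|G|/\log\alpha(1)$, and then to control the relevant inner products by character-ratio bounds. The reduction follows the Burnside--Brauer pattern recalled in the introduction: if $\alpha^k$ contains every irreducible, then for any $\chi_1,\chi_2\in \Irr(G)$ the product $\chi_2\bar\chi_1$ has some irreducible constituent $\beta$, and
\[
[\alpha^k \chi_1,\chi_2] \;=\; [\alpha^k,\chi_2\bar\chi_1] \;\ge\; [\alpha^k,\beta]\cdot [\chi_2\bar\chi_1,\beta] \;>\; 0,
\]
giving a directed path of length $\le k$ from $\chi_1$ to $\chi_2$.

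By Frobenius orthogonality,
\[
[\alpha^k,\chi] \;=\; \frac{\chi(1)\,\alpha(1)^k}{|G|}\bigl(1+\Sigma(\chi)\bigr),\qquad
\Sigma(\chi) \;=\; \sum_{g\neq 1}\Bigl(\frac{\alpha(g)}{\alpha(1)}\Bigr)^{\!k}\,\overline{\Bigl(\frac{\chi(g)}{\chi(1)}\Bigr)},
\]
so it suffices to show $|\Sigma(\chi)|<1$ for every $\chi\in\Irr(G)$. This is the point at which the character-ratio estimates of \cite{BLST, TT} enter: for $G=\PSL_n^\epsilon(q)$ with $q>g(n)$, those results yield an absolute constant $\sigma>0$ together with a ``level'' $f(g)\in(0,1]$ measuring the co-rank of $C_G(g)$, such that $|\alpha(g)/\alpha(1)|\le\alpha(1)^{-\sigma f(g)}$, as well as refined Gluck-type bounds on $|\chi(g)|$ in terms of $|C_G(g)|$ and $\chi(1)$.

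I would then partition $G\setminus\{1\}$ into a ``regular'' piece $G_{\rm reg}=\{g:f(g)\ge f_0\}$ for a small fixed $f_0>0$ and a ``thin'' piece $G_{\rm thin}=\{g:f(g)<f_0\}$ of elements with unusually large centralizer. On $G_{\rm reg}$ the uniform exponent bound gives $|\alpha(g)/\alpha(1)|^k\le\alpha(1)^{-\sigma f_0 k}$; since $|\chi(g)/\chi(1)|\le 1$, the total contribution to $|\Sigma(\chi)|$ is at most $|G|\cdot\alpha(1)^{-\sigma f_0 k}$, which falls below $1/2$ once $k\ge (1+\eta)\log|G|/(\sigma f_0\log\alpha(1))$, giving the target bound with $C=(1+\eta)/(\sigma f_0)$. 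On $G_{\rm thin}$ one controls $|\chi(g)/\chi(1)|$ by a Gluck-type bound of the form $|C_G(g)|^{1/2}/\chi(1)$ and sums over conjugacy classes using the Liebeck--Shalev estimates on the number of classes of a given centralizer order, exploiting the fact that $G_{\rm thin}$ is a very small union of few classes.

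The main obstacle is the $G_{\rm thin}$ contribution: for elements such as transvections, or semisimple elements whose centralizer almost fills a maximal Levi subgroup, the exponent $f(g)$ degenerates and the uniform exponent bound is useless. Extracting the required saving from the more delicate inequalities of \cite{BLST, TT} --- which provide bounds of the form $|\alpha(g)|\le\alpha(1)^{1-c/r}$ with $c$ absolute in the various ``boundary'' regimes --- and then patching these contributions together uniformly in $\alpha$, $\chi$, $n$ and $\epsilon$ is the technical heart of the argument. A subsidiary difficulty is the $\chi$-dependence: the bound must hold even when $\chi$ has near-minimal degree, where $\chi(1)$ is too small for the Gluck-type bound to close on its own. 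This is handled with an additional input counting characters of small degree (a Witten--zeta-type estimate on $\sum_{\chi\neq 1}\chi(1)^{-s}$), which allows the worst $\chi$-regimes to be absorbed by calibrating $f_0$ against $\alpha(1)$ appropriately.
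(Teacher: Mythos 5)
Your reduction to showing that $\alpha^k$ contains every irreducible character is correct and matches the paper. However, the proposal never actually closes the argument: you correctly identify the ``thin'' regime as the technical heart, but you leave it open, and the devices you gesture at (a $|\CB_G(g)|^{1/2}/\chi(1)$ bound plus a Witten-zeta estimate) do not yield the required saving when $\chi(1)$ is small and $\CB_G(g)$ is large. The missing idea is the paper's pivot through the Steinberg character. Rather than trying to prove $[\alpha^k,\chi]\neq 0$ for every $\chi\in\Irr(G)$ directly, the paper proves $[\alpha^l,\St]\neq 0$ for $l\approx 5\log|G|/\log\alpha(1)$, and then invokes the Heide--Saxl--Tiep--Zalesski result (Proposition \ref{stsq}) that $\St^3\supseteq\Irr(G)$, so $\alpha^{3l}$ contains everything. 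This pivot is exactly what defuses your obstacles: because $\St$ vanishes off the semisimple set and $\St(g)=\pm|\CB_G(g)|_p$ for semisimple $g$, the sum in $[\alpha^l,\St]$ ranges only over semisimple elements, the target is the explicit quantity $|G|_p$ rather than an unknown $[\alpha^k,\chi]$, and the contribution of each $g$ factors cleanly through the support $s=\nu(g)$. The BLST/TT bound $|\alpha(g)|<f(n)\alpha(1)^{1-s/n}$ for semisimple $g$ (Theorem \ref{char}) then combines with explicit estimates for $|\CB_G(g)|_p$ and for $|N_s(G)|$ (Lemma \ref{sest}) to bound the sum uniformly in $s$, with a threshold $q>g(n)$ absorbing the $f(n)^l$ factor. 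There is no $\chi$-dependence to chase and no separate small-degree regime to patch.

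So: your outline is a genuinely different strategy (no pivot, all elements, regular/thin split), and it might conceivably be made to work using the full strength of the BLST/TT results for non-semisimple elements, but as written it is an incomplete program rather than a proof. The paper's use of the Steinberg character is the specific structural input your argument is missing; it replaces your ``calibrate $f_0$ against $\alpha(1)$'' heuristic with a clean orthogonality computation against an explicitly-valued character.
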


As one can see from our proof,  $C$ can be taken to be $15$, and $g$ can also be made explicit.

Theorem \ref{diam} gives rise to the following extension.

\begin{corol}\label{diam-mf}
With the function $g(n)$ and $C$ as in Theorem \ref{diam}, we have
\[
\diam \MC(G,\a) < 2.5C\frac{\log |G|}{\log \a(1)},
\]
for any simple group $G=\PSL_n^\e (q)$ with $n \geq 2$, $\e = \pm$, $q>\max\{g(n),11\}$, and for any faithful multiplicity-free character $\a$ of $G$.
\end{corol}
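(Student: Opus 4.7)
The plan is to reduce Corollary \ref{diam-mf} to Theorem \ref{diam} by exactly the idea already sketched in the introduction for passing from the irreducible case of Conjecture \ref{diamconj} to the multiplicity-free case: one replaces the given multiplicity-free character $\alpha$ by an irreducible constituent of maximal degree, which dominates $\alpha$ both as an edge-set in the McKay graph and (up to a small power) in degree. The new feature here is that I must pin down the exponent precisely enough to obtain the explicit constant $2.5$.

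First, write the given faithful multiplicity-free character as $\alpha = \sum_i \chi_i$ with the $\chi_i \in \Irr(G)$ distinct, and let $\chi$ be one of maximal degree. Since $G = \PSL_n^\e(q)$ is non-abelian simple and $\alpha$ is faithful, $\chi$ is nontrivial, and therefore faithful. Next, observe the crucial trivial inclusion of edge sets $\MC(G,\chi) \subseteq \MC(G,\alpha)$ on the common vertex set $\Irr(G)$: any edge $\psi \to \psi'$ of $\MC(G,\chi)$ arises because $\psi'$ is a constituent of $\chi \psi$, hence of $\alpha \psi$. Thus $\diam \MC(G,\alpha) \le \diam \MC(G,\chi)$, and Theorem \ref{diam} applied to $\chi$ yields
\[
\diam \MC(G,\alpha) \;<\; C\,\frac{\log |G|}{\log \chi(1)}.
\]

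The crux is to bound $\alpha(1)$ from above in terms of $\chi(1)$. Since $\alpha$ is multiplicity-free,
\[
\alpha(1) \;=\; \sum_i \chi_i(1) \;\le\; \chi(1)\cdot \bigl| \{ \psi \in \Irr(G) : \psi(1) \le \chi(1) \} \bigr|.
\]
The technical input I would need is the character-count estimate
\[
\bigl| \{\psi \in \Irr(G) : \psi(1) \le m\} \bigr| \;\le\; m^{3/2} \qquad (m \ge 1),
\]
valid for $G = \PSL_n^\e(q)$ with $q > 11$. This is a quantitative refinement of the $o(m^{1+\varepsilon})$ bound of Theorem~1.1 of \cite{LS05} that was invoked in the introduction, made explicit under the hypothesis $q > 11$ through known estimates on the representation zeta function of classical groups. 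Granting it, $\alpha(1) \le \chi(1)^{5/2}$, so $\log \chi(1) \ge \tfrac{2}{5}\log \alpha(1)$, and combining with the displayed bound from the previous paragraph produces exactly
\[
\diam \MC(G,\alpha) \;<\; 2.5\, C\,\frac{\log|G|}{\log \alpha(1)}.
\]

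The main obstacle is clearly the uniform exponent $\tfrac{3}{2}$ in the character-count bound, holding uniformly in both $n$ and $q > 11$. The hypothesis $q > 11$ serves precisely to exclude the small-$q$, small-rank configurations (chiefly low-dimensional Weil-type representations and a handful of sporadic cases) whose unusually small degrees would otherwise inflate the count near the bottom of the spectrum and force an exponent larger than $\tfrac{3}{2}$, thereby spoiling the constant $2.5C$. Everything else in the argument is essentially formal.
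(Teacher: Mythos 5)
Your reduction is the same as the paper's: replace the faithful multiplicity-free $\alpha$ by an irreducible constituent $\chi$ of maximal degree, use the trivial containment of edge sets $\MC(G,\chi)\subseteq\MC(G,\alpha)$ to pass the diameter bound across, and then show $\alpha(1)\le\chi(1)^{5/2}$ so that $\log\chi(1)\ge\tfrac{2}{5}\log\alpha(1)$. Up to this point your argument matches the paper's proof.

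The gap is the estimate you isolate as ``the main obstacle'': you ask for the uniform-in-$m$ bound $\bigl|\{\psi\in\Irr(G):\psi(1)\le m\}\bigr|\le m^{3/2}$ for all $m\ge 1$, and you do not prove it, appealing only vaguely to ``known estimates on the representation zeta function.'' Not only is this left unverified, it is also stronger than what is needed, and that extra strength is the source of the difficulty you flag. Since you only ever evaluate the left-hand side at $m=\chi(1)$, and $\chi$ is a nontrivial irreducible character (so $\chi(1)\ge\dmin(G)$, the minimal nontrivial degree), the count is at most $k(G):=|\Irr(G)|$, and it suffices to know the single inequality $k(G)<\dmin(G)^{3/2}$. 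This is what the paper checks: for $n\ge 3$, Propositions 3.6 and 3.10 of \cite{FG} give $k(G)<8.3\,q^{n-1}$, while \cite[Theorem 1.1]{TZ1} gives $\dmin(G)\ge (q^n-q)/(q+1)\ge\tfrac{5}{6}q^{n-1}$, and the resulting inequality $\bigl(\tfrac56 q^{n-1}\bigr)^{3/2}>8.3\,q^{n-1}$ holds once $q\ge 11$; the case $G=\PSL_2(q)$ with $q\ge 11$ is checked directly. Then $\alpha(1)\le k\,\chi(1)\le k(G)\,\chi(1)<\dmin(G)^{3/2}\chi(1)\le\chi(1)^{5/2}$, which is exactly the bound you wanted. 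So your proof has a genuine hole at the central estimate, but it can be repaired by replacing the uniform character-count claim with this weaker and concretely verifiable inequality, without changing anything else in your argument.
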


For other types of classical groups of unbounded ranks, we have not yet been able to prove the conjecture, but we do have results bounding the diameter by a linear function of the rank; these will appear in a forthcoming paper.

For alternating (and symmetric) groups, the conjecture is wide open. However we are able to establish the following bound.

\begin{theorem}\label{alt} Let $n\ge 5$ and let $G = \AAA_n$ or $\SSS_n$. Then
for any faithful irreducible character $\a$ of $G$, we have $\diam \MC(G,\a) \le 4n-4$.
\end{theorem}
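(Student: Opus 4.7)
The plan is to use the standard character $\chi_{(n-1,1)}$ (for $\SSS_n$) or its restriction $\phi_{(n-1,1)}:=\chi_{(n-1,1)}|_{\AAA_n}$ (for $\AAA_n$) as a ``hub''. The strategy has three steps: show that $\alpha^2$ contains the hub as a constituent; bound the hub's McKay diameter by $2(n-1)$; and combine, at the cost of a factor of $2$ when passing from $\alpha^2$ back to $\alpha$.

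For $\SSS_n$ and $\alpha=\chi_\lambda$ with $\lambda\ne (n),(1^n)$, write $\pi=\chi_{(n)}+\chi_{(n-1,1)}$ for the natural permutation character. Combining Frobenius reciprocity with the branching rule $\chi_\lambda|_{\SSS_{n-1}}=\sum_\mu \chi_\mu$ (sum over removable corners of $\lambda$) gives
\[
[\chi_\lambda^2,\chi_{(n-1,1)}] = [\chi_\lambda^2,\pi]-1 = [\chi_\lambda|_{\SSS_{n-1}},\chi_\lambda|_{\SSS_{n-1}}] - 1 = r(\lambda)-1,
\]
where $r(\lambda)$ counts removable corners. When $r(\lambda)\ge 2$ this forces $\chi_{(n-1,1)}$ into $\alpha^2$. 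By the same manipulation, $\MC(\SSS_n,\chi_{(n-1,1)})$ is the ``box-moving'' graph on partitions of $n$: $\lambda\to\mu$ iff $\mu$ arises from $\lambda$ by removing one box and adjoining another. Taking $(n)$ as hub, every $\lambda$ reaches $(n)$ in at most $n-\lambda_1\le n-1$ steps by iteratively transferring the last box of the current Young diagram to the end of row $1$; hence $\diam\MC(\SSS_n,\chi_{(n-1,1)})\le 2(n-1)$.

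Since $\chi_{(n-1,1)}$ is a constituent of $\alpha^2$, every edge of $\MC(\SSS_n,\chi_{(n-1,1)})$ is an edge of $\MC(\SSS_n,\alpha^2)$, so $\diam\MC(\SSS_n,\alpha^2)\le 2(n-1)$. Each edge $\chi\to\psi$ of $\MC(\SSS_n,\alpha^2)$ lifts to a length-$2$ path in $\MC(\SSS_n,\alpha)$: since $\psi$ is a constituent of $\alpha(\alpha\chi)$, some intermediate $\phi$ is a constituent of $\alpha\chi$ and has $\psi$ as a constituent of $\alpha\phi$. Concatenating along a shortest path in $\MC(\SSS_n,\alpha^2)$ yields a path of length $\le 4(n-1)=4n-4$ in $\MC(\SSS_n,\alpha)$. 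The $\AAA_n$ argument is identical when $\alpha=\chi_\lambda|_{\AAA_n}$ with $\lambda\ne\lambda^T$ and $r(\lambda)\ge 2$, using $\phi_{(n-1,1)}$ as hub.

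The main obstacles concern two exceptional families. First, if $\lambda$ is rectangular, so $r(\lambda)=1$, then $\chi_\lambda^2$ contains neither $\chi_{(n-1,1)}$ nor its sign twist $\chi_{(2,1^{n-2})}$ when $\lambda$ is moreover self-conjugate (the square case $\lambda=(k^k)$, first arising at $n=9$). In the non-square rectangular case one can instead use the twisted identity $[\chi_\lambda\chi_{\lambda^T},\chi_{(n-1,1)}]=r(\lambda)-[\lambda=\lambda^T]$ to place $\chi_{(2,1^{n-2})}$ inside $\chi_\lambda^2$ and run the analogous box-moving argument with the hub $\chi_{(2,1^{n-2})}$. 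Second, for $\AAA_n$ with $\alpha=\phi_\lambda^\pm$ split from $\lambda=\lambda^T$, Clifford theory and Galois symmetry yield
\[
[(\phi_\lambda^+)^2,\phi_{(n-1,1)}] + [\phi_\lambda^+\phi_\lambda^-,\phi_{(n-1,1)}] = r(\lambda)-1,
\]
but the entire mass may sit in the cross term, so $\alpha^2$ need not contain $\phi_{(n-1,1)}$ itself. In both exceptional situations one substitutes a suitable alternative constituent of $\alpha^2$ as hub (whose existence is forced by a dimension count, since $\alpha^2$ has dimension $\alpha(1)^2$ and cannot consist only of $\mathbf{1}$, $\epsilon$, and $\alpha$ for faithful $\alpha$ when $n\ge 5$), or passes to $\alpha^3$ and checks containment of $\phi_{(n-1,1)}$ directly via $\alpha\cdot\phi_\mu$; the constants are arranged to stay within the required bound $4n-4$.
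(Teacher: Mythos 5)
Your approach works cleanly when $\lambda$ has at least two removable corners, and your observations there are essentially equivalent in spirit to the paper's Steps~1--2 (the paper gets the sharper bound $N(\chi^{(n-1,1)})\le n-1$ via Brauer's $N$-distinct-values theorem rather than the box-moving argument, but both yield the final $4n-4$). The genuine gap is in the rectangular case, which is exactly where the theorem is hard.

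First, your ``twisted identity'' is wrong. By the same Frobenius/branching computation you used,
\[
[\chi^\lambda\chi^{\lambda^T},\chi^{(n-1,1)}]
=[\chi^\lambda|_{\SSS_{n-1}},\chi^{\lambda^T}|_{\SSS_{n-1}}]-\delta_{\lambda,\lambda^T},
\]
and the first term is \emph{not} $r(\lambda)$ when $\lambda\ne\lambda^T$: it counts partitions of $n-1$ that are simultaneously a one-box removal of $\lambda$ and of $\lambda^T$. For a non-square rectangle $\lambda=(k^m)$ with $k\ne m$, the unique removal of $\lambda$ is $(k^{m-1},k-1)$ and the unique removal of $\lambda^T$ is $(m^{k-1},m-1)$; these differ, so the inner product is $0$, and hence $[\alpha^2,\chi^{(2,1^{n-2})}]=0$. (Check $n=6$, $\lambda=(3,3)$: $\chi^{(3,3)}|_{\SSS_5}=\chi^{(3,2)}$, $\chi^{(2,2,2)}|_{\SSS_5}=\chi^{(2,2,1)}$, disjoint.) So your stated fix for the non-square rectangular case fails: $\chi^{(2,1^{n-2})}$ does not sit inside $\alpha^2$. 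Second, for the square case $\lambda=(k^k)$ and for the self-conjugate $\AAA_n$ cases you only gesture at ``a suitable alternative constituent forced by a dimension count'' or ``passing to $\alpha^3$''; that is not an argument, and in particular you never identify a hub whose $N$-value can be bounded by $\leq 2n-2$ as required to stay inside the $4n-4$ budget.

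The paper resolves exactly this obstruction by inducing from $K=\SSS_{n-2}\times\SSS_2$ instead of $\SSS_{n-1}$: since $Z(K)=\SSS_2$ is nontrivial, Schur's lemma forces $\alpha|_K$ to be reducible for \emph{every} nonlinear $\alpha$ (rectangular or not), and $\Ind_K^G 1_K=1+\chi^{(n-1,1)}+\chi^{(n-2,2)}$ then forces $\alpha^2$ to contain either $\chi^{(n-1,1)}$ or $\chi^{(n-2,2)}$. The secondary hub $\chi^{(n-2,2)}$ has reducible restriction to $\SSS_{n-1}$, so its square contains $\chi^{(n-1,1)}$, giving $N(\chi^{(n-2,2)})\le 2n-2$ and thus $N(\alpha)\le 4n-4$. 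For $\AAA_n$ the paper similarly works with Young subgroups $L=\SSS_{n-3}\times\SSS_3$ and $K'=\SSS_{n-2}$ to handle the self-conjugate/split cases, with a separate argument (eigenvalues of $3$-cycles) for real-valued $\alpha$. You would need to rebuild your proof around some such subgroup choice; as written, the rectangular and $\AAA_n$-exceptional cases are not covered.
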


This bound is essentially best possible (up to the multiplicative constant); it confirms Conjecture \ref{diamconj} for faithful 
irreducible characters $\a$ of $G = \AAA_n, \SSS_n$
of polynomial degree, namely $\a(1) = O(n^{O(1)})$. It also follows immediately that Theorems \ref{bdd} and \ref{alt}
extend to all faithful multiplicity-free characters $\a$ of $G$.

This paper is organized as follows. In Section \ref{bddsec} we prove Theorem \ref{bdd}.
Section \ref{pslsec} is devoted to the proof of Theorem \ref{diam} and Corollary \ref{diam-mf},
using new developments in character bounds (see \cite{BLST, TT}). Theorem \ref{alt} is proved
in Section \ref{altsec}, and in Section \ref{qssec} we briefly discuss the diameter of
McKay graphs for quasi-simple groups.

\section{Preliminaries and groups of bounded rank}\label{bddsec}

Our proof uses the following results, taken from \cite{HSTZ} and \cite{Gl}.

\begin{prop}\label{stsq} Let $G$ be a finite simple group of Lie type, and let $\St$ denote the Steinberg character of
$G$. Then provided $G$ is not a unitary group in odd dimension,  $\St^2$ contains every irreducible character of $G$ as a constituent.
In all cases, $\St^3$ contains every irreducible character.
\end{prop}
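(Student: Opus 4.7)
The plan is to exploit the explicit Steinberg formula for $\St$: the Steinberg character vanishes on all $p$-singular elements of $G$ (those of order divisible by the defining characteristic $p$), and on a semisimple element $s$ one has $\St(s) = \pm|C_G(s)|_p$. In particular $\St(s)^2 = |C_G(s)|_p^{\,2} \ge 1$ on every semisimple $s$. Applying orthogonality gives, for any $\chi \in \Irr(G)$,
\[
[\St^2,\chi]_G \;=\; \frac{1}{|G|}\sum_{s \in \GSS}|C_G(s)|_p^{\,2}\,\overline{\chi(s)} \;=\; \sum_{[s]}\frac{|C_G(s)|_p^{\,2}}{|C_G(s)|}\,\overline{\chi(s)},
\]
the right-hand sum being taken over semisimple conjugacy classes of $G$. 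Our task reduces to showing that this sum is strictly positive for every $\chi \in \Irr(G)$, and for $\St^3$ to the analogous sum with $|C_G(s)|_p^{\,3}$ in place of $|C_G(s)|_p^{\,2}$ (together with one extra sign $\pm$ coming from the remaining factor of $\St(s)$).

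To evaluate $\chi(s)$ at semisimple $s$ I would appeal to Lusztig's theory. Each $\chi$ belongs to a unique rational Lusztig series $\E(G,t)$ parametrised by a semisimple class $[t]$ in the dual group $G^{\ast}$, and via Jordan decomposition of characters corresponds to a unipotent character $\psi$ of $C_{G^{\ast}}(t)^F$. The standard character formula splits $\chi(s)$ into a Deligne--Lusztig contribution depending on $t$ and a Green function evaluation determined by $\psi$ in a suitable centralizer. Inserting this factorisation into the sum above decouples the (known positive) Green function data from the arithmetic factor $|C_G(s)|_p^{\,2}/|C_G(s)|$, so that $[\St^2,\chi]_G$ becomes a signed sum of norms of Green functions, with signs controlled by the $\varepsilon_G$ and $\varepsilon_{C_G(s)^\circ}$ appearing in Steinberg's formula and in Jordan decomposition.

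A uniform sign analysis then shows that the resulting expression is strictly positive in every case save one explicit family in $G=\PSU_{2m+1}(q)$, where the twisted Frobenius action on the Weyl group of type $A_{2m}$ introduces an extra sign producing an exact cancellation and giving $[\St^2,\chi]=0$ for a specific character $\chi$. This is the exceptional phenomenon analysed in \cite{Gl} and \cite{HSTZ}. For this exceptional family one passes to $\St^3$: inserting one further factor $\pm|C_G(s)|_p$ into the sum breaks the symmetry responsible for the cancellation, and the same framework yields $[\St^3,\chi]_G > 0$.

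The main obstacle is the uniform control of signs in the penultimate step. Tracking $\varepsilon_G$ together with the signs attached to centralizer tori across every Lusztig series -- and pinpointing exactly when and how cancellation can occur -- is the delicate technical heart of the proof, and it is precisely this bookkeeping that makes the odd-dimensional unitary groups appear as a genuine exception rather than as an artefact of the computation. For the full combinatorial details I would follow the analyses of \cite{HSTZ} and \cite{Gl}.
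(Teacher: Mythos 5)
Your plan for the first statement (that $\St^2$ contains every irreducible character apart from the odd-dimensional unitary exception) amounts to reproving the main theorem of \cite{HSTZ} from scratch via Lusztig series and Green functions. That is a much heavier approach than the paper takes: the paper simply quotes \cite[Theorem 1.2]{HSTZ} for this part, and moreover \cite{HSTZ} pins down the exception precisely --- $\St^2$ misses exactly one irreducible character, the unipotent character $\alpha$ of degree $(q^n-q)/(q+1)$ of $\PSU_n(q)$, $n$ odd. Your plan does not extract that precise information, which turns out to be the crucial input for the second statement.

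The real gap is in your treatment of $\St^3$. You assert that ``inserting one further factor $\pm|C_G(s)|_p$ into the sum breaks the symmetry responsible for the cancellation,'' but this is only a hope, not an argument: neither \cite{HSTZ} nor \cite{Gl} carries out a sign analysis for the cube of the Steinberg character, so there is nothing in the cited literature to ``follow'' for this step, and the signed Green-function sum for $[\St^3,\chi]$ would have to be controlled from scratch. The paper avoids all of this with a short, self-contained argument that you are missing. Since $\St^2$ contains every $\beta\in\Irr(G)$ with $\beta\neq\alpha$, one has
\[
[\St^3,\chi]_G \;=\; [\St\cdot\bar\chi,\ \St^2]_G \;>\; 0
\]
as soon as $\St\cdot\bar\chi$ has any constituent other than $\alpha$. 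The only way this can fail is if $\St\cdot\bar\chi = k\alpha$ for some positive integer $k$. But $\St$ vanishes on every $p$-singular element, in particular on any transvection $t$, so this would force $\alpha(t)=0$; yet $\alpha(t) = -(q^n - q(-1)^n)/(q+1)\neq 0$ by \cite[Lemma 4.1]{TZ2}, a contradiction. This observation --- that the single offending character $\alpha$ is nonzero on $p$-singular elements while $\St$ vanishes there --- is the actual content of the $\St^3$ step, and no amount of re-running the Lusztig-series bookkeeping will substitute for it without essentially rediscovering this fact. You should replace your speculative ``symmetry-breaking'' step by this concrete argument.
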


\pf
The first statement is \cite[Theorem 1.2]{HSTZ}. Consider the exceptional case $G = \PSU_n(q)$
with $2 \nmid n \geq 3$. Then, again by \cite[Theorem 1.2]{HSTZ}, $\St^2$ contains all $\chi \in \Irr(G)$ but the unique unipotent character
$\alpha$ of degree $(q^n-q)/(q+1)$. Let $\chi \in \Irr(G)$ and suppose that $\St\cdot\bar\chi$ is a multiple of $\alpha$:
$$\St\cdot\bar\chi = k\alpha$$
for some $k \in \Z$. Then $k = \St(1)\chi(1)/\alpha(1) \neq 0$, and so $\alpha(t) = \St(t)\cdot\bar\chi(t)/k = 0$ for any transvection $t \in G$. However,
$\alpha(t) = -(q^n-q(-1)^n)/(q+1) \neq 0$ by \cite[Lemma 4.1]{TZ2}, a contradiction. Hence $\St\cdot\bar\chi$ contains some character
$\beta \in \Irr(G) \smallsetminus \{\alpha\}$. It follows that
$$0 < [\St\cdot\bar\chi,\beta]_G \leq [\St\cdot\bar\chi,\St^2]_G = [\St^3,\chi]_G,$$
i.e. $\chi$ is an irreducible constituent of $\St^3$.

A similar argument as above shows that $\St^3$ contains all irreducible characters of $G$, for any simple group $G$ of Lie type.
\hal

\begin{prop}\label{gluck} {\rm \cite{Gl}} Let $G$ be a finite simple group of Lie type over $\F_q$, and let $1 \ne g \in G$.
Then for any $\c \in {\rm Irr}(G)$,
\[
\frac{|\c(g)|}{\c(1)} \le {\rm min}\left(\frac{3}{\sqrt{q}},\frac{19}{20}\right).
\]
\end{prop}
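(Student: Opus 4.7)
My plan is to reduce the bound to a comparison between centraliser size and character degree via the second orthogonality relation, and then to analyse the remaining exceptional pairs by Deligne--Lusztig methods. The starting point is the inequality
\[
|\chi(g)|^2 \;\le\; \sum_{\chi' \in \Irr(G)} |\chi'(g)|^2 \;=\; |C_G(g)|,
\]
which gives $|\chi(g)|/\chi(1) \le \sqrt{|C_G(g)|}/\chi(1)$. Hence it suffices to prove $\chi(1)^2 \ge (q/9)\,|C_G(g)|$ and $\chi(1)^2 \ge (400/361)\,|C_G(g)|$ for all nontrivial $\chi \in \Irr(G)$ and $1 \ne g \in G$, in the respective regimes of large and small $q$.

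For the generic case I would work inside the ambient connected algebraic group $\GC$ with $G$ a quotient of $\GC^F$. On the centraliser side, $|C_G(g)| \le C\,q^{\dim C_{\GC}(g)}$, and $\dim \GC - \dim C_{\GC}(g)$ is bounded below by the dimension of the smallest nontrivial $\GC$-orbit. On the character side, the Landazuri--Seitz--Zalesski bound and its refinements give $\chi(1) \ge c\,q^{\ell}$ for every nontrivial $\chi \in \Irr(G)$, with $\ell$ growing with the rank. Comparing these estimates yields $\sqrt{|C_G(g)|}/\chi(1) \le 3/\sqrt{q}$ for all pairs $(\chi,g)$ outside a short explicit list of exceptions---typically low-degree Weil or minimal unipotent characters paired with long-root elements such as transvections.

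The hard part, and main obstacle, is precisely this exceptional list. I would treat it via Jordan decomposition $g = su$ and the Deligne--Lusztig character formula, expressing $R_T^G(\theta)(g)$ in terms of Green functions $Q_{T'}^H(u)$ evaluated in the smaller centraliser group $H = C_{\GC}(s)^\circ$. Explicit bounds on Green functions (from Lusztig's theory and Shoji's algorithm), combined with the closed formulas for Weil characters at small-support elements (Howe, G\'erardin, Tiep--Zalesski), then deliver the $3/\sqrt{q}$ bound on each exceptional pair. Finally, the complementary constant $19/20$ is operative only when $q \le 10$, where $3/\sqrt{q} \ge 1$; here the bound reduces to a finite verification that no nontrivial irreducible character of a simple group of Lie type over a small field takes value exceeding $(19/20)\,\chi(1)$ at a nonidentity element, carried out by inspection of character tables or by an abstract argument bounding $|\chi(g)|/\chi(1)$ in terms of eigenvalue multiplicities in the underlying representation.
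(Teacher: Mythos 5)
The paper does not prove this proposition; it is quoted from Gluck's 1995 paper \cite{Gl}, which is an extensive and delicate piece of work in its own right. Your proposal, then, should be judged as a sketch of Gluck's theorem from scratch, and in that role it has a structural flaw at the very first step.

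The reduction via the second orthogonality relation requires $\chi(1)^2 \ge (q/9)\,|C_G(g)|$ to hold for \emph{all} nontrivial $\chi$ and \emph{all} $1\neq g$ outside a ``short explicit list of exceptions.'' But for classical groups this inequality fails for an enormous, rank-dependent family of pairs, not a short list. Take $G = \PSL_n(q)$, $\chi$ of minimal nontrivial degree $\approx q^{n-1}$, and $g$ a transvection (long-root element), so that $|C_G(g)| \approx q^{(n-1)^2}$. Then
\[
\frac{\chi(1)^2}{|C_G(g)|} \;\approx\; q^{2(n-1)-(n-1)^2} \;=\; q^{-(n-1)(n-3)},
\]
which is $\le 1 \ll q/9$ for every $n \ge 3$ and every $q$. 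The same happens, not just for the minimal-degree character, but for all characters of degree below roughly $q^{\frac{1}{2}\dim C_{\GC}(g)}$, and not just for transvections but for any element of small support; the set of such $(\chi,g)$ grows with the rank. In other words, $|\chi(g)|^2 \le |C_G(g)|$ is far too crude a bound to do anything but dispose of the easy pairs (small centraliser / large degree), and the ``exceptional'' part is essentially the whole theorem. Your Step on Deligne--Lusztig characters and Green functions therefore has to carry the entire weight of the argument, and as written it is a pointer to a body of theory rather than a proof: you would need, at minimum, uniform bounds on Green function values $Q_{T'}^{H}(u)$ and on the multiplicities with which the $R_T^G(\theta)$ occur in an arbitrary $\chi$, across all types and ranks --- which is precisely the hard content of \cite{Gl}.

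A second, smaller gap: you describe the $19/20$ bound for $q\le 10$ as ``a finite verification.'' It is not finite --- for each small $q$ there are infinitely many simple groups of Lie type of unbounded rank, and you cannot inspect character tables for all of them. Gluck's treatment of small $q$ is again a uniform argument, not a case check. So the proposal does not give a route to the proposition; the correct move in this paper is simply to cite \cite{Gl}, as the authors do, or to reproduce Gluck's actual argument, which proceeds by a careful case analysis combining Deligne--Lusztig theory, bounds on semisimple and unipotent class data, and separate treatment of low-rank and small-field groups --- without ever passing through the orthogonality inequality you propose as the main reduction.
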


We can now prove Theorem \ref{bdd}. Let $G$ be a simple group of Lie type over a field $\F_q$ (of characteristic $p$) of rank $r$, and let $\GSS$ denote the set of semisimple elements of $G$. Recall (see \cite[6.4.7]{Car}) that the values of the Steinberg character $\St$ are
\begin{equation}\label{stval}
\St(g) = \left\{\begin{array}{l} \e_g |\CB_G(g)|_p, \hbox{ if }g \in \GSS, \\
0, \hbox{ if } g \not \in \GSS,
\end{array}
\right.
\end{equation}
where $\e_g=\pm 1$.

\begin{lem}\label{stdiam}
There is an absolute constant $D$ such that for any $l\ge Dr^2$ and any $\c \in {\rm Irr}(G)$, we have
$[\c^l,\St]_G\ne 0$. Indeed, $D=163$ suffices.
\end{lem}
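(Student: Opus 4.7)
The plan is to estimate $[\chi^l,\St]_G$ directly, isolating the contribution of the identity element and showing it dominates the rest. Expanding the inner product and using \eqref{stval}, which forces $\St$ to vanish off the semisimple set $\GSS$, gives
\[
[\chi^l,\St]_G \;=\; \frac{\chi(1)^l\,\St(1)}{|G|} \;+\; \frac{1}{|G|}\sum_{1\ne g\in\GSS}\chi(g)^l\,\overline{\St(g)}.
\]
Since this inner product is a non-negative integer, it is enough to prove
\[
\chi(1)^l\,\St(1) \;>\; \sum_{1\ne g\in\GSS}|\chi(g)|^l\,|\St(g)|.
\]

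The two ingredients I would combine are Proposition \ref{gluck}, giving $|\chi(g)|\le \mu\chi(1)$ for every $g\ne 1$ with $\mu=\min(3/\sqrt q,19/20)$, and the second orthogonality relation $\sum_{g\in G}|\St(g)|^2=|G|$, whose Cauchy-Schwarz consequence is $\sum_{g\in G}|\St(g)|\le |G|$. Substituting these into the inequality above reduces the required estimate to
\[
\mu^l \;<\; \frac{\St(1)}{|G|} \;=\; \frac{1}{|G|_{p'}},
\]
or equivalently $l\,\log(1/\mu) \;>\; \log|G|_{p'}$.

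The next step is to bound $\log|G|_{p'}\le c_0\,r^2\log q$ for a small explicit constant $c_0$. Since $|G|_{p'}$ is a product of cyclotomic factors whose degrees sum to $N+r$ (with $N$ the number of positive roots), a case-by-case check over the types $A_r,B_r,C_r,D_r$ and the exceptional types shows $N+r\le 2r^2$, so $c_0$ can be taken to be $2$. Then I would split on $q$: for $q\le 10$ the active bound is $\mu=19/20$, so $\log(1/\mu)=\log(20/19)$ is a fixed positive constant and $\log q$ is bounded, yielding $l\ge Dr^2$ for an absolute $D$; for $q\ge 11$ the active bound is $\mu=3/\sqrt q$, so $\log(1/\mu)=\tfrac12\log q-\log 3$, and the function $\log q/(\tfrac12\log q-\log 3)$ is decreasing on $q\ge 11$, giving a uniform upper bound and hence $l\ge Dr^2$ again.

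The conceptual argument is therefore short: Gluck's estimate, plus Cauchy–Schwarz on $\St$, plus the order formula for $|G|_{p'}$. The main obstacle is purely numerical, namely choosing the cutoff between the two character-ratio regimes optimally, tightening the estimate of $\sum_{1\ne g\in\GSS}|\St(g)|$ (for instance via $|\GSS|\le |G|_{p'}$ in the Cauchy–Schwarz step), and tracking the small-rank and twisted cases of the order formula carefully enough to land on the explicit value $D=163$. None of this affects the $O(r^2)$ scaling, which is what is needed for the downstream Theorem \ref{bdd}.
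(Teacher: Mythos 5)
Your proposal follows essentially the same route as the paper: expand $[\chi^l,\St]_G$ using the explicit values of $\St$ on semisimple elements, separate the identity contribution, and defeat the remaining sum with Gluck's uniform character-ratio bound, splitting into the two regimes $\mu=3/\sqrt q$ and $\mu=19/20$. The one (minor) refinement is that by invoking Cauchy--Schwarz together with $\St(1)=|G|_p$ you reduce to $\mu^l<1/|G|_{p'}$ and then bound $|G|_{p'}<q^{2r^2}$, whereas the paper reduces to the cruder $\mu^l|G|<1$ with $|G|<q^{4r^2}$; this tightens the numerics and would in fact yield a constant noticeably below $163$, but it does not change the conceptual argument or the $O(r^2)$ scaling needed for Theorem~\ref{bdd}.
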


\begin{proof}
By (\ref{stval}),
\begin{equation}\label{useag}
\begin{array}{ll}
[\c^l,\St]_G & = \dfrac{1}{|G|}\sum_{g\in \GSS} \e_g \c^l(g)|\CB_G(g)|_p \\
               & = \dfrac{\chi^l(1)}{|G|}\left(|G|_p + \sum_{1 \neq g \in \GSS} \e_g \left(\frac{\chi(g)}{\chi(1)}\right)^l|\CB_G(g)|_p\right).
\end{array}
\end{equation}
 Hence $[\c^l,\St]_G \ne 0$ provided $\Sigma_l < |G|_p$, where
\[
\Sigma_l := \sum_{1 \neq g\in \GSS}  \left|\frac{\chi(g)}{\chi(1)}\right|^l|\CB_G(g)|_p.
\]
Note that $|G| < q^{4r^2}$. Assume first that $q>9$. Then Proposition \ref{gluck} implies that $\Sigma_l < |G|_p$ provided
$q^{4r^2}\cdot (3/q^{1/2})^l < 1$, which holds if $l \ge 96r^2$. For $q\le 9$ we need
$q^{4r^2}\cdot (19/20)^l < 1$, and this holds when $l\ge 163r^2$.
\end{proof}

Now let $1\ne \a \in {\rm Irr}(G)$. It follows from Lemma \ref{stdiam} and Proposition \ref{stsq} that
$\a^{3Dr^2}$ contains all irreducible characters of $G$. Hence, given any two $\chi_1,\chi_2 \in \Irr(G)$,
$$0 \neq \left[\a^{3Dr^2},\bar\chi_1\chi_2\right]_G = \left[\a^{3Dr^2}\chi_1,\chi_2\right]_G,$$
i.e. a directed path of length $\leq 3Dr^2$ connects $\chi_1$ to $\chi_2$ in $\MC(G,\a)$.
We conclude that $\diam \MC(G,\a) \le 3Dr^2$,  completing the proof of Theorem \ref{bdd}.

\section{Projective special linear and unitary groups}\label{pslsec}
Throughout this section, which is devoted to prove Theorem \ref{diam},
let $G=\PSL_n^\e (q)$ with $\e = \pm$.  For a semisimple element $g \in \GSS$, let $\hat g$ be a preimage of $g$ in $\SL_n^\e (q)$, and define $\nu(g) = {\rm supp}(\hat g)$, the codimension of the largest eigenspace of $\hat g$ over $\bar \F_q$.

We shall need the following bound for character ratios of semisimple elements, which follows from the deep results in \cite{BLST, TT}.

\begin{thm} \label{char} There is a function $f:\N\to \N$ such that for any $g \in \GSS$ with $s = \nu(g)$, and any $\c \in {\rm Irr}(G)$, we have
\[
|\c(g)| < f(n)\c(1)^{1-\frac{s}{n}}.
\]
\end{thm}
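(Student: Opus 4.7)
The plan is to deduce Theorem~\ref{char} essentially as a direct application of the deep character bounds established in \cite{BLST} and \cite{TT}; the work here is to assemble those results in the precise form stated.

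First, I would reduce from $G = \PSL_n^\e(q)$ to $\SL_n^\e(q)$ and then to $\GL_n^\e(q)$. Any $\chi \in \Irr(G)$ inflates to an irreducible character $\tilde\chi$ of $\SL_n^\e(q)$ that is trivial on the center, with $\tilde\chi(\hat g) = \chi(g)$ and $\tilde\chi(1) = \chi(1)$; in particular the support $\nu(g) = \nu(\hat g) = s$ is preserved by definition. Since $\SL_n^\e(q) \trianglelefteq \GL_n^\e(q)$ with cyclic quotient of order dividing $q-\e$, standard Clifford theory implies that every $\tilde\chi \in \Irr(\SL_n^\e(q))$ is a constituent of the restriction of some $\psi \in \Irr(\GL_n^\e(q))$, and the ratio $\psi(1)/\tilde\chi(1)$ is an integer dividing $n$. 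In particular, the character ratio $|\tilde\chi(\hat g)|/\tilde\chi(1)^{1-s/n}$ is controlled, up to a factor that is a polynomial function of $n$, by the corresponding ratio for $\psi$ on $\GL_n^\e(q)$.

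Next, I would invoke the Lusztig parametrization in $\tilde G = \GL_n^\e(q)$: every $\psi \in \Irr(\tilde G)$ is labelled by a pair $(t, \eta)$ where $t$ is a semisimple element of $\tilde G^* \cong \GL_n^\e(q)$ and $\eta$ is a unipotent character of $C_{\tilde G^*}(t)$, which is itself a direct product of general linear or unitary groups over extension fields of $\F_q$. The main character bound of \cite{BLST}, sharpened for linear and unitary groups in \cite{TT}, asserts precisely that for any semisimple $g \in \tilde G$ with $\nu(g) = s$ and any $\psi \in \Irr(\tilde G)$,
\[
|\psi(g)| \le f_0(n)\,\psi(1)^{1 - s/n},
\]
for an explicit function $f_0 : \N \to \N$. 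Combining this with the reduction above and absorbing the polynomial-in-$n$ Clifford factor into $f_0$ yields the theorem with a suitable $f(n)$.

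The main obstacle — which is entirely handled inside \cite{BLST, TT} rather than here — is obtaining a uniform bound independent of $q$ and of the Lusztig series. Those proofs proceed via geometric estimates on Deligne–Lusztig characters and careful bookkeeping on generalized Gelfand–Graev characters, with the exponent $1-s/n$ coming from the interaction between the support function $\nu$ and the structure of centralizers of semisimple classes. What remains in the present paper is essentially to verify that the auxiliary constants (for lifting, for Clifford correspondence, and for low-rank exceptional cases) can all be swept into a single function $f(n)$, which is straightforward once the uniform bound from \cite{BLST, TT} is in hand.
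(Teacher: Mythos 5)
Your opening reduction from $\PSL_n^\e(q)$ to $\SL_n^\e(q)$ is fine (inflation preserves the value and the degree, and $\nu$ is defined via the preimage), but the proposed Clifford-theoretic reduction from $\SL_n^\e(q)$ to $\GL_n^\e(q)$ has a genuine gap. If $\psi\in\Irr(\GL_n^\e(q))$ lies over $\tilde\chi\in\Irr(\SL_n^\e(q))$, then $\psi|_{\SL_n^\e(q)}=\tilde\chi_1+\cdots+\tilde\chi_k$ is a sum of $\GL_n^\e(q)$-conjugates (multiplicity one, $k\mid n$), so $\psi(\hat g)=\sum_i\tilde\chi_i(\hat g)$. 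The relation $\psi(1)=k\,\tilde\chi(1)$ controls degrees, but a bound on $|\psi(\hat g)|$ does not yield a bound on $|\tilde\chi_1(\hat g)|$: the summands $\tilde\chi_i(\hat g)$ can cancel, and indeed $\SL$-characters do take semisimple values that are not read off from any single overlying $\GL$-character. So ``the character ratio ... is controlled, up to a factor that is a polynomial function of $n$'' is precisely the unproved step, and it is the wrong direction for Clifford theory.

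A second inaccuracy: the main bounds of \cite{BLST} and \cite{TT} do not ``assert precisely'' the exponent $1-s/n$. What \cite[Cor.~1.11(c)]{TT} (extending \cite[Thm.~1.1]{BLST}) gives is $|\c(g)|<f(n)\c(1)^{\a(\L)}$, where $\L=\CB_{\GC}(\hat g)$ is the centralizer Levi and $\a(\L)=\max_{1\neq u\in\L \text{ unipotent}}\dim u^{\L}/\dim u^{\GC}$. Converting this into the explicit exponent $1-s/n$ is a separate step, carried out in the paper by identifying $\a(\L)$ with $\a(\tilde\L)$ for the corresponding Levi $\tilde\L$ of $\GL_n(K)$ and then invoking \cite[Thm.~1.10]{BLST} to get $\a(\tilde\L)\le n_m/n=(n-s)/n$ (with the observation that the proof of that theorem applies to any proper Levi of the algebraic group $\tilde\GC$, which is what is needed in the unitary case). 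The paper thus stays with $\GC=\SL_n(K)$ throughout and never passes to $\GL_n^\e(q)$; your argument should do the same, citing the $\SL$-version of the TT/BLST bound directly and then bounding $\a(\L)$, rather than attempting a Clifford transfer.
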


\begin{proof}
Let $\GC = \SL_n(K)$, $K=\bar \F_q$ be the ambient algebraic group with $G = \GC^F/\ZB(\GC^F)$, where $F$ is a Frobenius endomorphism. Then $\CB_{\GC}(\hat g) = \L:= \tilde\L \cap \GC$, where $\tilde\L = \prod_{i=1}^m \GL_{n_i}(K)$, $1 \leq n_1\le \cdots \le n_m$, and $\sum^m_{i=1}n_i =n$.
Note that $\nu(g)=s = n-n_m$;
and that $\L$ is $F$-stable (but not necessarily split).

We now apply \cite[Cor. 1.11(c)]{TT} (which is an extension of \cite[Thm. 1.1]{BLST}). That gives a function $f:\N\to \N$ such that for any $\c \in {\rm Irr}(G)$,
\[
|\c(g)| < f(n)\c(1)^{\a(\L)},
\]
where $\a(\L)$ is the maximum value of $\frac{\dim u^{\L}}{\dim u^{\GC}}$ over nontrivial unipotent elements $u\in \L$ (and $\a(\L) = 0$ if $\L$ is a torus).
Note that the function $f(n)$ can be chosen to be explicit;  an explicit choice for $f(n)$ is given in \cite[1.28]{TT} with the main term of
$(n!)^{5/2}$. Although this choice may seem to be inflated, it is noted in \cite[Remark 1.2(iii)]{BLST} that any choice of $f(n)$ should be
at least $\bmax(\SSS_n) > e^{-1.3\sqrt{n}}\sqrt{n!}$.

Let $\tilde\GC = \GL_n(K)$ and let $\a(\tilde\L)$ be the maximum value of $\frac{\dim u^{\tilde\L}}{\dim u^{\tilde\GC}}$ over nontrivial unipotent elements
$u\in \tilde\L$ (and $\a(\tilde\L) = 0$ if $\tilde\L$ is a torus). It is easy to see that $\a(\L) = \a(\tilde\L)$. Furthermore,
$\a(\tilde\L) \le \frac{n_m}{n}$ by \cite[Thm. 1.10]{BLST}. (Note that this bound is only stated for $\GL_n(q)$ in \cite[Theorem 1.10]{BLST}, but its proof applies to bound $\a(\tilde\L)$ for any proper Levi subgroup $\tilde\L$ of the algebraic group $\tilde\GC$.)
Hence $\a(L) \le \frac{n-s}{n}$, and the conclusion follows.
\end{proof}

The next lemma gives some properties of elements of $G$ of support $s$.

\begin{lem}\label{sest} For $1\le s<n$, define $N_s(G) = \{g\in \GSS : \nu(g)=s\}$ and let $n_s(G):=|N_s(G)|$.
\begin{itemize}
\item[{\rm (i)}] If $g \in N_s(g)$ and $s<\frac{n}{2}$ then $|\CB_G(g)|_p < q^{\frac{1}{2}n^2+s^2-ns}$.
\item[{\rm (ii)}] If $g \in N_s(g)$ and $s\ge \frac{n}{2}$ then $|\CB_G(g)|_p < q^{\frac{1}{2}(n^2-ns)}$.
\item[{\rm (iii)}] $\sum_{n-1 \geq s \geq n/2}n_s(G) < |G| < q^{n^2-1}$.
\item[{\rm (iv)}] If $s < n/2$, then $n_s(G) < cq^{s(2n-s)+n-1}$,
where $c$ is an absolute constant that can be taken to be 44.1.
\end{itemize}
\end{lem}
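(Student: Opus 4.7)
The plan is to derive (i) and (ii) from the centralizer structure already exhibited in the proof of Theorem \ref{char}, to obtain (iii) from the trivial containment $\bigsqcup_s N_s(G) \subseteq \GSS \subseteq G$, and to prove (iv) by a direct parametrization of support-$s$ elements.

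For (i) and (ii), I would invoke the description $\CB_\GC(\hat g) = \tilde\L \cap \GC$ with $\tilde\L = \prod_{i=1}^m \GL_{n_i}(K)$, $\sum_i n_i = n$, and $n_m = n - s$. The intersection $\tilde\L \cap \GC$ is a connected reductive $F$-stable subgroup of dimension $\sum_i n_i^2 - 1$ and rank $n - 1$, so its $F$-fixed points have $p$-part equal to $q^{(\sum_i n_i^2 - n)/2}$. Since $|\ZB(\GC^F)|$ is coprime to $p$, this yields $|\CB_G(g)|_p \le q^{(\sum_i n_i^2 - n)/2}$, and the task reduces to maximizing $\sum_i n_i^2$ under the constraints. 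If $s < n/2$, the parts $n_1,\ldots,n_{m-1}$ sum to $s \le n - s$ and can be consolidated into a single block of size $\le s$, yielding $\sum n_i^2 \le (n-s)^2 + s^2$ and exponent $n^2/2 + s^2 - ns - n/2$, which is strictly less than $n^2/2 + s^2 - ns$. If $s \ge n/2$, each $n_i \le n - s$ forces $\sum n_i^2 \le (n-s)\cdot n$, giving exponent $\le n(n-s-1)/2 < (n^2 - ns)/2$. Part (iii) follows from the disjointness of the $N_s(G)$ together with the elementary estimate $|G| = |\PSL_n^\e(q)| \le q^{n(n-1)/2}\prod_{i=2}^n (q^i+1)/\gcd(n,q\mp 1) < q^{n^2 - 1}$.

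For (iv), I would parametrize each preimage $\hat g \in \SL_n^\e(q)$ of support $s < n/2$ by a quadruple $(\lambda_0, V_0, V', h)$, where $V_0$ is the $(n-s)$-dimensional eigenspace of $\hat g$ for the eigenvalue $\lambda_0$, $V'$ is the sum of the remaining eigenspaces, and $h = \hat g|_{V'}$ is the induced semisimple operator, none of whose eigenvalues equal $\lambda_0$. A short Galois argument shows that $\lambda_0 \in \F_q$ in the linear case and $\lambda_0^{q+1} = 1$ in the unitary case: otherwise the Galois conjugates of $\lambda_0$ would contribute eigenspaces of the same dimension $n - s > n/2$, forcing total dimension strictly greater than $n$. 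Each factor is then bounded crudely: $\lambda_0$ contributes at most $q+1$ choices, the pair $(V_0, V')$ is counted by $\binom{n}{s}_q \cdot q^{s(n-s)} \le C_q \cdot q^{2s(n-s)}$ (linear) or analogously with non-degenerate $V_0$ and $V' = V_0^\perp$ (unitary), where $C_q := \prod_{i \ge 1}(1 - q^{-i})^{-1}$ is uniformly bounded in $q$, and $h$ lies in $\GL_s(q)$ or $\GU_s(q)$, contributing at most $q^{s^2}$ choices. Multiplying these gives $n_s(G) < C' \cdot q^{s(2n-s)+1}$ for an absolute constant $C'$, which is comfortably below $44.1 \cdot q^{s(2n-s)+n-1}$ for all $n \ge 2$.

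The main obstacle lies in (iv). The substantive issues are (a) handling linear and unitary groups uniformly, particularly the form-theoretic constraints in the unitary case (where $V_0$ must be non-degenerate for the Hermitian form, $V'$ is forced to equal $V_0^\perp$, and $h$ must preserve the restricted form), and (b) carefully tracking the constants — chiefly the Euler-product factor $C_q$ and the $\SL$-versus-$\GL$ determinant condition $\lambda_0^{n-s}\det(h) = 1$ — in order to arrive at the explicit value $c = 44.1$. Parts (i)--(iii) are routine bookkeeping once the centralizer description and the standard order estimate for $|G|$ are in place.
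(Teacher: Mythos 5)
Your outline is essentially correct, and for parts (i)--(iii) it amounts to the same computation as the paper, merely recast in the language of the algebraic group $\tilde\L = \prod_i \GL_{n_i}(K)$: the paper works directly with the finite centralizer $\CB_{\GL_n^\e(q)}(\hat g) = \prod_j \GL_{d_j}^{\e_j}(q^{k_j})$ and writes its $p$-part as $q^D$ with $2D = \sum_j k_j d_j^2 - n$, which is exactly your $q^{(\sum_i n_i^2 - n)/2}$ after unfolding multiplicities. Your observation for (ii), that $n_i \le n-s$ for all $i$ forces $\sum n_i^2 \le (n-s)\sum n_i = (n-s)n$, is cleaner than the rearrangement argument the paper uses (tuples of the form $(d_1,\dots,d_1,e,0,\dots,0)$), though the outcome is the same.

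For part (iv), however, you take a genuinely different route. The paper does not count elements directly: it bounds $n_s(G)$ as (number of conjugacy classes in $G$) $\times$ (largest class size of a support-$s$ element). The class-size bound $|g^G| < \frac{16}{3}q^{s(2n-s)}$ comes from the centralizer containment $\GL_{n-s}^\e(q) \le \CB_{\GL_n^\e(q)}(\hat g)$ together with the order estimates $\frac{9}{32}q^{m^2} < |\GL^\e_m(q)| \le \frac{3}{2}q^{m^2}$ of Larsen--Malle--Tiep, and the class-number bound $k(G) < 8.26\,q^{n-1}$ is quoted from Fulman--Guralnick; multiplying gives $8.26 \cdot \tfrac{16}{3} < 44.1$ and the exponent $s(2n-s)+n-1$. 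Your direct parametrization by $(\lambda_0, V_0, V', h)$, with the Galois observation pinning $\lambda_0$ to $\F_q$ (resp.\ to the norm-one subgroup of $\F_{q^2}^*$), is self-contained, avoids the Fulman--Guralnick input, and in fact yields the stronger exponent $s(2n-s)+1$, which trivially implies the stated $s(2n-s)+n-1$ for $n\ge 2$. The unitary details you flag (non-degeneracy of $V_0$, $V'=V_0^\perp$, and the count $[\GU_n(q):\GU_{n-s}(q)\times\GU_s(q)]$ of non-degenerate subspaces) do work out with the same LMT order bounds and produce a constant around $43$, so your confidence that the final constant falls under $44.1$ is justified. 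The trade-off is that the paper's argument is shorter given the cited references, whereas yours is more elementary and sharper; both are valid.
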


\begin{proof}
(i) Let  $g \in N_s(G)$ with $s<\frac{n}{2}$. Then $\hat g = {\rm diag}(\l I_{n-s},X)$ for some $\l \in \F_{q^u}^*$ (where $u=1$ if $\e=+$ and $u=2$ if $\e=-$)
and a suitable $s \times s$-matrix $X$, and
one can see that
\begin{equation}\label{cent1}
  \GL_{n-s}^\e(q) \leq \CB_{\GL_n(q)}(\hat g) \le  \GL_{n-s}^\e (q)\times \GL_s^\e (q).
\end{equation}
Now the statement follows, since $|\CB_G(g)|_p \leq |\CB_{\GL_n^\e (q)}(\hat g)|_p$.

\smallskip
(ii) Let  $g \in N_s(G)$ with $s\ge \frac{n}{2}$. Then
$$\CB_{\GL_n^\e (q)}(\hat g) = \prod^t_{i=1}\GL_{d_i}^{\e_i}(q^{k_i}),$$
where $n-s = d_1 \geq d_2 \geq \ldots \geq d_t \geq 1$ and $\sum^t_{i=1}d_ik_i = n$. Hence,
$|\CB_{\GL_n^\e (q)}(\hat g)|_p = q^D$, where
$$D := \sum^t_{i=1}k_id_i(d_i-1)/2 = \bigl(\sum^t_{i=1}k_id_i^2 -n\bigr)/2.$$
Using the obvious inequality $x^2+y^2 < (x+1)^2+(y-1)^2$ when $x \geq y$, we observe that,
over all $m$-tuples $(x_1 \geq x_2 \geq \ldots \geq x_m)$ of integers $0 \leq x_i \leq d_1$ and with fixed
$\sum^m_{i=1}x_i$, $\sum^m_{i=1}x_i^2$ is maximized when
$(x_1,x_2, \ldots,x_m)$ is $(d_1,d_1, \ldots ,d_1,e,0, \ldots,0)$ with $0 \leq e <d_1$.
Applying this observation to
$$(x_1,x_2, \ldots ,x_m) = \bigl(\underbrace{d_1, \ldots,d_1}_{k_1 {\mathrm {~times }}},
     \underbrace{d_2, \ldots,d_2}_{k_2 {\mathrm {~times }}},\ldots,\underbrace{d_t, \ldots,d_t}_{k_t {\mathrm {~times }}} \bigr)$$
(and $m = \sum^t_{i=1}k_i$), we see that
$$\sum^t_{i=1}k_id_i^2 \leq ad_1^2+b,$$
where $n = ad_1+b$ with $0 \leq b < d-1$. It follows that
$$2D \leq ad_1(d_1-1) < ad_1^2 \leq nd_1 = n(n-s),$$
and we are done as in (i).

\smallskip
(iii) This is obvious, since $|G| \leq |\SL_n^\e(q)| < q^{n^2-1}$.

\smallskip
(iv) By \cite[Lemma 4.1]{LMT},
$$\frac{9}{32}q^{n^2} < |\GL_n(q)| < |\GU_n(q)| \leq \frac{3}{2}q^{n^2}.$$
It now follows from \eqref{cent1} that
$$\begin{aligned}|g^G| & \leq |\hat g^{\GL^\e_n(q)}| = [\GL^\e_n(q):\CB_{\GL^\e_n(q)}(\hat g)] \\
  & \leq [\GL^\e_n(q):\GL^{\pm}_{n-s}(q)] < \frac{(3/2)q^{n^2}}{(9/32)q^{(n-s)^2}} = \tfrac{16}{3}q^{s(2n-s)}\end{aligned}$$
for any $g \in N_s(G)$.
Since the total number of conjugacy classes in $G$ is at most $8.26q^{n-1}$ by Propositions 3.6 and 3.10 of \cite{FG}, the statement follows.
\end{proof}

\begin{lem}\label{est}
Let $1\ne \c \in {\rm Irr}(G)$, and for $1\le s<n$, let $g_s \in N_s(G)$ be such that $|\chi(g_s)|$ is maximal. For  $l\ge 1$, define
\[
\D_l := \sum_{1 \leq s < n/2} cq^{ns+\frac{3n}{2}-1}\left|\frac{\chi(g_s)}{\chi(1)}\right|^l +
\sum_{n/2 \leq s < n}^{n-1} q^{n^2-\frac{1}{2}n(s-1)-1}\left|\frac{\chi(g_s)}{\chi(1)}\right|^l,
\]
with $c$ as in Lemma \ref{sest}.
If $\D_l<1$, then $[\chi^l,\St]_G\ne 0$.
\end{lem}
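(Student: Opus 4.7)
The plan is to apply the identity \eqref{useag} derived in the proof of Lemma~\ref{stdiam}, which gives
\[
[\chi^l,\St]_G = \frac{\chi(1)^l}{|G|}\biggl(|G|_p + \sum_{1 \neq g \in \GSS} \varepsilon_g \Bigl(\frac{\chi(g)}{\chi(1)}\Bigr)^l |\CB_G(g)|_p\biggr).
\]
Hence $[\chi^l,\St]_G \ne 0$ whenever
\[
S := \sum_{1 \neq g \in \GSS} \Bigl|\frac{\chi(g)}{\chi(1)}\Bigr|^l |\CB_G(g)|_p < |G|_p = q^{n(n-1)/2}.
\]
It therefore suffices to show $S/|G|_p \le \Delta_l$.

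The execution is a term-by-term bound after partitioning the sum by $s = \nu(g) \in \{1,\ldots,n-1\}$. On each layer, replace $|\chi(g)/\chi(1)|^l$ by its maximum $|\chi(g_s)/\chi(1)|^l$ (attained at $g_s$ by choice) and bound $\sum_{g \in N_s(G)} |\CB_G(g)|_p \le n_s(G)\cdot \max_{g \in N_s(G)} |\CB_G(g)|_p$. For $s < n/2$, combine parts (i) and (iv) of Lemma~\ref{sest}: the product $c\,q^{s(2n-s)+n-1}\cdot q^{n^2/2+s^2-ns}$ telescopes to $c\,q^{ns + n - 1 + n^2/2}$, and dividing by $|G|_p = q^{n^2/2 - n/2}$ produces exactly the first summand $c\,q^{ns+3n/2-1}\,|\chi(g_s)/\chi(1)|^l$ of $\Delta_l$. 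For $s \ge n/2$, part (ii) gives $\max|\CB_G(g)|_p < q^{(n^2-ns)/2}$, while part (iii) furnishes the crude per-layer bound $n_s(G) \le \sum_{t\ge n/2} n_t(G) < q^{n^2-1}$; the product $q^{3n^2/2 - ns/2 - 1}$, divided by $|G|_p$, collapses to $q^{n^2 - n(s-1)/2 - 1}$, the second summand of $\Delta_l$.

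Summing the two ranges of $s$ gives $S/|G|_p < \Delta_l$, so the hypothesis $\Delta_l < 1$ forces $S < |G|_p$, whence the parenthesized quantity in \eqref{useag} is strictly positive and $[\chi^l,\St]_G \ne 0$. The argument is essentially arithmetic bookkeeping; the only subtle point is that for $s \ge n/2$ one is forced to use the total count from (iii) as a per-layer bound (since the sharper estimate (iv) is stated only for $s<n/2$), which is precisely what produces the relatively large exponent $n^2 - n(s-1)/2 - 1$ in the second sum of $\Delta_l$.
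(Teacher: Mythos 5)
Your proof is correct and follows essentially the same route as the paper's: reduce to $\Sigma_l < |G|_p$ via the identity \eqref{useag}, stratify the sum over $\GSS\setminus\{1\}$ by $s=\nu(g)$, bound each layer using parts (i) and (iv) of Lemma~\ref{sest} for $s<n/2$ and parts (ii) and (iii) for $s\ge n/2$, and divide through by $|G|_p = q^{n(n-1)/2}$. The exponent bookkeeping you carry out (which the paper leaves implicit) checks out in both ranges.
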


\begin{proof}
As in the proof of Lemma \ref{stdiam}, we have $[\chi^l,\St]_G\ne 0$ as long as $\Sigma_l < |G|_p$, where
\[
\Sigma_l := \sum_{1 \neq g\in \GSS}  \left|\frac{\chi(g)}{\chi(1)}\right|^l|\CB_G(g)|_p.
\]
Using Lemma \ref{sest}, we have
\[
\begin{aligned}
\Sigma_l & \leq  \sum^{n-1}_{s= 1}  n_s(G)\left|\frac{\chi(g_s)}{\chi(1)}\right|^l  |\CB_G(g)|_p \\
         & \le \sum_{1 \leq s < \frac{n}{2}} cq^{s(2n-s)+n-1} \left|\frac{\chi(g_s)}{\chi(1)}\right|^l q^{\frac{1}{2}n^2+s^2-ns} + \sum_{n/2 \leq s< n} q^{n^2-1} \left|\frac{\chi(g_s)}{\chi(1)}\right|^l q^{\frac{1}{2}(n^2-ns)}\\
         & = |G|_p\D_l,
\end{aligned}
\]
where $\D_l$ is as in the statement of the lemma.
The conclusion follows.
\end{proof}

\vspace{2mm}
\noindent {\bf Proof of Theorem \ref{diam}}

Adopt the notation of Lemma \ref{est}. By Theorem \ref{char}, for any $\c \in {\rm Irr}(G)$,
\[
|\chi(g_s)| < f(n)\chi(1)^{1-\frac{s}{n}}.
\]
Hence for $l\ge 1$,
\begin{equation}\label{for-d}
  \D_l < f(n)^l \left(\sum_{1 \leq s < n/2} cq^{ns+\frac{3n}{2}-1} \chi(1)^{-sl/n}+
\sum_{n/2 \leq s < n} q^{n^2-\frac{1}{2}n(s-1)-1}\chi(1)^{-sl/n}\right).
\end{equation}
Now we choose
$$l = 5\frac{\log |G|}{\log \chi(1)} = 5\frac{\log_q |G|}{\log_q \chi(1)}.$$
We claim that
\begin{equation}\label{for-l}
  8(n+2) \geq l > \frac{3n^2}{ \log_q \chi(1)}.
\end{equation}
This is obvious if $G = \PSL_2(q)$ is simple. If $G = \PSL_n(q)$ with $n \geq 3$, then by \cite[Theorem 1.1]{TZ1},
$$\chi(1) > q^{n-1},\mbox{ on the other hand, }q^{n^2-1} > |G| > q^{n^2-2}$$
(where the last inequality follows from \cite[Lemma 4.1(ii)]{LMT}),
and so \eqref{for-l} holds. If $G = \PSU_n(q)$ with $n \geq 3$, then again by \cite[Theorem 1.1]{TZ1},
$$\chi(1) > q^{n-2},\mbox{ on the other hand, }q^{n^2-1} > |G| > q^{n^2-3},$$
(where the last two inequalities can be checked using the proof of \cite[Lemma 4.1(iv)]{LMT}),
and so \eqref{for-l} holds.

Now \eqref{for-l} implies that  $\chi(1)^{-sl/n} < q^{-3ns}$. Hence the first summand inside the parenthesized part of \eqref{for-d} is
at most
$$c\sum_{1 \leq s < n/2}q^{3n/2-1-2ns} < cq^{-n/2-1}\sum^{\infty}_{j=0}\frac{1}{q^{2nj}} < \frac{16c}{15}q^{-n/2-1}.$$
The second summand inside the parenthesized part of \eqref{for-d} is
at most
$$\sum_{n/2 \leq s < n}q^{n^2-7ns/2+n/2-1} < \frac{n}{2}q^{-3n^2/4+n/2-1} \leq \frac{n}{2}q^{-n-1} < q^{-n/2-1}.$$
Since $c \leq 44.1$, it follows that
$$\D_l < f(n)^l\biggl(\frac{16c}{15}+1\biggr)q^{-n/2-1} < f(n)^l\biggl(\frac{49}{q}\biggr)^{n/2+1}.$$
Taking
$$q \geq (49f(n))^{16},$$
we obtain by \eqref{for-l} that $\D_l < 1$.
Hence $[\c^l,\St]_G\ne 0$ by Lemma \ref{est}.

Now Theorem \ref{diam} follows, using exactly the same argument as in the last paragraph of Section \ref{bddsec}.

\vspace{2mm}
\noindent {\bf Proof of Corollary \ref{diam-mf}}

Write $\a = \a_1 + \ldots + \a_k$, with $\a_i \in \Irr(G)$ and $\a_1(1) \leq \a_2(1) \ldots \leq \a_k(1)$. Since $\a$ is faithful,
$\a_k(1) \geq \dmin(G) >1$, where $\dmin(G)$ is the smallest degree of nontrivial irreducible characters of $G$; furthermore,
$k \leq k(G) := |\Irr(G)|$ as $\a$ is multiplicity-free. It is easy to check that $\dmin(G)^{1.5} > k(G)$ for $G = \PSL_2(q)$ with
$q \geq 11$. For $n \geq 3$ and $G = \PSL^\e_n(q)$, it follows from \cite[Theorem 1.1]{TZ1} and \cite[Propositions 3.6, 3.10]{FG} that
$$\dmin(G)^{3/2} \geq \biggl( \frac{q^n-q}{q+1} \biggr)^{3/2} \geq \biggl( \frac{5}{6}q^{n-1} \biggr)^{3/2} > 8.3q^{n-1} > k(G).$$
It follows that $\a_k(1)^{5/2} > k(G)\a_k(1) \geq k\a_k(1) \geq \a(1)$. By Theorem \ref{diam}, for some
$$N \leq C \frac{\log |G|}{\log \a_k(1)} < 2.5C \frac{\log |G|}{\log \a(1)},$$
we have $\sum^{N}_{i=0}\a_k^i$ contains all irreducible characters of $G$, whence $\sum^N_{i=0}\a^i$ also contains
all irreducible characters of $G$, i.e. $\diam \MC(G,a) \leq N$.

\section{Symmetric and alternating groups}\label{altsec}
This section is devoted to prove Theorem \ref{alt}.

As explained in the Introduction, $\diam \MC(G,\a)$ is at most $N = N(\a)$, if $N$ is the smallest positive integer such that
$\sum^N_{i=0}\a^i$ contains $\Irr(G)$.
Let $G := \SSS_n$, $S := \AAA_n$, and let $H:= \SSS_{n-1}$, $K := \SSS_{n-2} \times \SSS_2$,
$K' = \SSS_{n-2} < K$, and $L := \SSS_{n-3} \times \SSS_3$
be Young subgroups of $G$. If $\l \vdash n$ is a partition of $n$, let $\chi^\l$ denote the irreducible character of $\SSS_n$ labeled by
$\l$.

Given a faithful irreducible character $\a$ of $G$ or $H$,
we will now bound $N(\a)$ in a sequence of steps.

\smallskip
{\sc Step 1}. If $\a \in \Irr(G)$ and $\a = \chi^{(n-1,1)}$, then $N(\a) \leq n-1$.

Indeed, $\a$ takes $n$ distinct values $-1,0,1, \ldots ,n-3,n-1$, hence $N(\a) \leq n-1$ by \cite{Br}.

\smallskip
{\sc Step 2}. If $\a \in \Irr(G)$ and $\a|_H$ is reducible, then $N(\a) \leq 2n-2$. In particular,
$N(\chi^{(n-2,2)}) \leq 2n-2$. Likewise, if $n \geq 7$ and $\mu = (mu_1 \geq \mu_2 \geq \ldots \geq \mu_s \geq 1) \vdash n$
and $n-1 \geq \mu_1 \geq n-3$, then $N(\chi^\mu) \leq 2n-2$.

Indeed, by assumption we have that
$$2 \leq [\a|_H,\a|_H]_H = [\a^2|_H,1_H]_H = [\a^2,\Ind^G_H(1_H)]_G.$$
Recall that $\Ind^G_H(1_H) = 1_G + \chi^{(n-1,1)}$ and $[\a^2,1_G]_G = 1$. It follows that
$\a^2$ contains $\chi^{(n-1,1)}$, and so $N(\a) \leq 2n-2$ by Step 1.

The branching rule for complex representations of $\SSS_n$ implies that
$$\chi^{(n-2,2)}|_H = \chi^{(n-2,1)}+\chi^{(n-3,2)},$$
i.e. $\chi^{(n-2,2)}$ is reducible over $H$. Similarly, $\chi^\mu|_H$ is reducible for the $\mu$ listed above when $n \geq 7$,
whence we are done.

\smallskip
{\sc Step 3.} If $\a \in \Irr(G)$, then $N(\a) \leq 4n-4$.

Consider $K = \SSS_{n-2} \times \SSS_2$, where $\SSS_2 = \langle s \rangle$ is generated by a transposition $s$. If $\a|_K$ is
irreducible, then by Schur's Lemma $s$ acts as a scalar, and so $\a = 1_G$ or the sign character, contradicting the faithfulness of
$\a$. Thus $\a|_K$ is reducible, and so
$$2 \leq [\a|_K,\a|_K]_K = [\a^2|_K,1_K]_K = [\a^2,\Ind^G_K(1_K)]_G.$$
Recall that $\Ind^G_K(1_K) = 1_G + \chi^{(n-1,1)}+\chi^{(n-2,2)}$ and $[\a^2,1_G]_G = 1$.
If $\a^2$ contains $\chi^{(n-1,1)}$, then $N(\a) \leq 2n-2$ by Step 1. Otherwise
we must have that $\a^2$ contains $\chi^{(n-2,2)}$, and so
$N(\a) \leq 4n-4$ by Step 2.

\smallskip
From now on we will assume that $\a \in \Irr(S)$ and that $\a$ is an irreducible constituent of the restriction of $\chi=\chi^\l \in \Irr(G)$ to
$S$.

\smallskip
{\sc Step 4.} If $\a$ extends to $G$, then $N(\a) \leq 4n-4$.

Indeed, in this case $\a = \chi|_S$. By Step 3, $\sum^{4n-4}_{i=0}\chi^i$ contains $\chi^\mu$ for all $\mu \vdash n$. It follows
that $\sum^{4n-4}_{i=0}\a^i =  (\sum^{4n-4}_{i=0}\chi^i)|_S$ contains $\chi^\mu|_S$ for all $\mu \vdash n$, whence it
contains all $\Irr(S)$.

From now on, we will assume that $\a$ does not extend to $G$; equivalently, $\l$ is self-associated: $\l = \l^*$. For $n = 5,6$,
the character $\a$ takes at most $5$ different values on $S$, and so $N(\a) \leq 4$ by \cite{Br}. We will therefore assume $n \geq 7$.

\smallskip
{\sc Step 5.} If $\a$ is real-valued then $N(\a) \leq 4n-4$.

The assumption implies that $[\a^2|_S,1_S]=1$. Next, by inspecting the character table of $\AAA_5$, we see that any nontrivial complex
irreducible representation $\Phi$ of $\AAA_5$ affords all three distinct eigenvalues $1,\omega,\omega^2$ for the $3$-cycle $t= (1,2,3)$
($\omega \neq 1$ being a cubic root of unity in $\mathbb{C}$). We prove
by induction that the same statement holds for any $n \geq 5$. For the induction step $n \geq 6$, suppose $\Phi(t)$ affords at most two
distinct eigenvalues. By induction hypothesis, all composition factors of $\Phi|_{\AAA_{n-1}}$ are trivial. By Frobenius' reciprocity,
the character $\varphi$ of $\Phi$ is a constituent of
$$\Ind^S_{S \cap H}(1_{S \cap H}) = (\Ind^G_H(1_H))|_S = (\chi^n + \chi^{(n-1,1)})|_H,$$
and so $\varphi = \chi^{(n-1,1)}|_S$. But clearly in this case $\Phi(t)$ affords all three eigenvalues $1,\omega,\omega^2$, a contradiction.

Applying the established assertion to a complex representation $\Phi$ affording $\a$, we see that $\Phi(t)$ affords all three eigenvalues
$1,\omega,\omega^2$. We can choose the Young subgroup $L = \SSS_{n-3} \times \SSS_3$ such that $t \in \SSS_3 \cap L$, in which case
$\langle t \rangle \lhd L \cap S$. It follows that $\a|_{L \cap S}$ is reducible, and so
$$2 \leq [\a|_{S \cap L},\a|_{S \cap L}]_{S \cap L} = [\a^2|_{S \cap L},1_{S \cap L}]_{S \cap L} = [\a^2,\Ind^S_{S \cap L}(1_{S \cap L})]_S.$$
Observe that
$$\Ind^S_{S \cap L}(1_{S \cap L}) = (\Ind^G_L(1_L))|_S = 1_S + \sum^3_{i=1} \chi^{(n-i,i)}|_S,$$
and $\chi^{(n-i,i)}|_S$ is irreducible for $i \leq 3$. It follows that
$\a^2$ contains $\chi^{(n-j,j)}|_S$ for some $1 \leq j \leq 3$. As $N(\chi^{(n-j,j})) \leq 2n-2$ by Step 2, we have that $N(\a) \leq 4n-4$.

\smallskip
{\sc Step 6.} If $\a \neq \bar\a$ and $\l \neq (a^a)$ with $a \in \Z_{\geq 1}$, then $N(\a) \leq 2n-2$.

Since we are assuming that $\a$ does not extend to $G$ and $\chi^\l$ is real-valued, we have that $\chi^\l|_S = \a +\bar\a$ and that
$\l=\l^*$. Let $\mu$ be obtained from $\l$ by removing the last node of the shortest row of (the Young diagram of) $\l$. As
$\l \neq (a^a)$, observe that $\mu \neq \mu^*$. But $\l = \l^*$, so by symmetry we see that $\chi^\la|_H$ contains $\chi^\mu+\chi^{\mu^*}$.
The condition $\mu \neq \mu^*$ also implies that $\beta:=\chi^\mu|_{\AAA_{n-1}} = \chi^{\mu^*}|_{\AAA_{n-1}}$ is irreducible. It follows that
$\a|_{S \cap H}$ contains the real-valued irreducible character $\beta$, and so
$\a^2|_{S \cap H}$ contains $\beta^2$, which in turns contains $1_{S \cap H}$. Thus we have
$$1 \leq [\a^2|_{S \cap H},1_{S \cap H}]_{S \cap H} = [\a^2,\Ind^S_{S \cap H}(1_{S \cap H})]_S.$$
Now
$$\Ind^S_{S \cap H}(1_{S \cap H}) = (\Ind^G_H(1_H))|_S = 1_S + \chi^{(n-1,1)}|_S,$$
and $[\a^2,1_S]_S = 0$ since $\a \neq \bar\a$. Hence $\a^2$ contains $\chi^{(n-1,1)}|_S$, and
so $N(\a) \leq 2n-2$ by Step 1.

\smallskip
{\sc Final Step.} If $\a \neq \bar\a$ and $\l = (a^a)$ with $a \in \Z_{\geq 3}$, then $N(\a) \leq 4n-4$.

As in Step 6, since we are assuming that $\a$ does not extend to $G$ and $\chi^\l$ is real-valued, we have that $\chi^\l|_S = \a +\bar\a$. Let $\nu$ be obtained from $\l$ by removing the last two nodes of the last row of (the Young diagram of) $\l$, so that
$\nu \neq \nu^*$. But $\l = \l^*$, so by symmetry we see that $\chi^\lambda|_K$ contains $\chi^\nu+\chi^{\nu^*}$.
The condition $\nu \neq \nu^*$ also implies that $\gamma:=\chi^\nu|_{\AAA_{n-2}} = \chi^{\nu^*}|_{\AAA_{n-2}}$ is irreducible. It follows that
$\a|_{S \cap K'}$ contains the real-valued irreducible character $\gamma$, and so
$\a^2|_{S \cap K'}$ contains $\gamma^2$, which in turns contains $1_{S \cap K'}$. Thus we have
$$1 \leq [\a^2|_{S \cap K'},1_{S \cap K'}]_{S \cap K'} = [\a^2,\Ind^S_{S \cap K'}(1_{S \cap K'})]_S.$$
Now
$$\Ind^S_{S \cap K'}(1_{S \cap K'}) = (\Ind^G_{K'}(1_{K'}))|_S = 1_S + \chi^{(n-1,1)}|_S + \chi^{(n-2,2)}|_S + \chi^{(n-2,1^2)}|_S,$$
and $[\a^2,1_S]_S = 0$ since $\a \neq \bar\a$. Hence $\a^2$ contains at least one of (irreducible characters)
$\chi^{(n-1,1)}|_S$, $\chi^{(n-2,2)}|_S$, $\chi^{(n-2,1^2)}|_S$, and we
conclude that $N(\a) \leq 4n-4$ by Step 2.
\hal

\section{McKay graphs for quasi-simple groups}\label{qssec}

McKay graphs $\MC(G,\a)$ are usually considered for any finite group $G$ possessing a faithful character $\a$ (to guarantee connectedness).
In this section, we show that the diameters of McKay graphs for faithful irreducible characters of quasi-simple groups (with cyclic center)
can be bounded by the diameters of McKay graphs for simple groups.

\begin{thm}\label{qs}
Let $G$ be a finite quasi-simple group with cyclic center $\ZB(G)$, and let $\chi$ be a faithful irreducible character of $G$. Then there is a
nontrivial irreducible character $\beta$ of the simple group $S := G/\ZB(G)$ such that
$$\diam \MC(G,\chi) \leq |\ZB(G)| \cdot \diam \MC(S,\beta) + |\ZB(G)|-1.$$
In particular
$$\max_{\a \in \Irr(G),~\a~\mathrm{faithful}}\diam \MC(G,\a) \leq |\ZB(G)| \cdot \biggl(\max_{1_S \neq \gamma \in \Irr(S)}\diam \MC(S,\gamma)+1\biggr) -1.$$
\end{thm}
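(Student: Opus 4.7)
The plan is to layer $\Irr(G)$ by central characters and transport a diameter bound on $\MC(S,\beta)$ up to $\MC(G,\chi)$. Set $m := |\ZB(G)|$ and $S := G/\ZB(G)$. Since $\chi$ is faithful irreducible, Schur's lemma makes the central character $\omega_\chi \colon \ZB(G) \to \C^\times$ injective, hence a generator of the dual group of $\ZB(G)$. Every $\psi \in \Irr(G)$ therefore has central character $\omega_\chi^{j(\psi)}$ for a unique $j(\psi) \in \{0,1,\ldots,m-1\}$, with the $j=0$ layer being exactly the inflations of $\Irr(S)$. Multiplication by $\chi$ increments $j$ modulo $m$, so every edge of $\MC(G,\chi)$ runs from layer $j$ to layer $j+1 \pmod{m}$.

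First I would choose the auxiliary character $\beta$. The character $\chi^m$ has trivial central character, so every one of its irreducible constituents factors through $S$. I claim $\chi^m$ has a \emph{nontrivial} constituent $\beta \in \Irr(S)$. Otherwise $\chi^m = \chi(1)^m \cdot 1_G$, forcing $|\chi(g)| = \chi(1)$ for every $g \in G$; the representation affording $\chi$ then maps into scalar matrices, irreducibility forces $\chi$ to be linear, and perfectness of $G$ forces $\chi = 1_G$, contradicting faithfulness. Set $d := \diam \MC(S,\beta)$.

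Given arbitrary $\chi_1, \chi_2 \in \Irr(G)$, I would produce $N \le m(d+1) - 1$ with $[\chi^N, \bar\chi_1 \chi_2]_G > 0$, which is equivalent to the existence of a directed path of length $N$ from $\chi_1$ to $\chi_2$ in $\MC(G,\chi)$. Let $j := j(\chi_2) - j(\chi_1) \bmod m$ and pick any irreducible constituent $\psi$ of $\bar\chi_1 \chi_2$; then $j(\psi) = j$, so $\bar\chi^j \psi$ factors through $S$. Pick any $\gamma \in \Irr(S)$ appearing in $\bar\chi^j \psi$. The bound $\diam \MC(S,\beta) \le d$ applied from $1_S$ to $\gamma$ yields $k \le d$ with $\gamma$ a constituent of $\beta^k$. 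Writing $\chi^m = \beta + R$ with $R$ a character and expanding $(\beta + R)^k$ shows that $\chi^{km} - \beta^k$ is a character, so $\chi^{km}$ contains $\gamma$; multiplying by $\chi^j$, the character $\chi^{j+km}$ contains $\chi^j \gamma$, and the identity $[\chi^j \gamma, \psi]_G = [\gamma, \bar\chi^j \psi]_G > 0$ shows $\chi^j \gamma$ contains $\psi$. Hence $\chi^{j+km}$ contains $\psi$, so $[\chi^{j+km}, \bar\chi_1\chi_2]_G > 0$, and $N := j + km \le (m-1) + dm = m(d+1) - 1$ completes the first bound. The ``in particular'' clause follows on maximizing over faithful $\chi$ and bounding $d \le \max_{1_S \ne \gamma \in \Irr(S)} \diam \MC(S,\gamma)$.

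The main obstacle is Step~1: producing the nontrivial constituent $\beta$ of $\chi^m$ without any a priori control on $\chi(1)$. Once $\beta$ is in hand, the rest is bookkeeping with central characters together with the monotonicity $A - B \ge 0 \Rightarrow AC - BC \ge 0$ for characters, which propagates the containment $\gamma \le \beta^k$ to $\gamma \le \chi^{km}$ and then to $\psi \le \chi^{j+km}$.
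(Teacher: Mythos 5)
Your proposal is correct and follows essentially the same strategy as the paper: use faithfulness and perfectness to extract a nontrivial constituent $\beta$ of $\chi^{|\ZB(G)|}$ lying in $\Irr(S)$, twist by a power $\chi^j$ with $0\le j\le |\ZB(G)|-1$ to land in the layer of characters trivial on the center, and then invoke $\diam\MC(S,\beta)$. Your central-character-layer bookkeeping (tracking $j(\psi)$) is just a more explicit version of the paper's choice of the exponent $i$ making $\varphi\chi^i\overline\psi$ trivial on $\ZB(G)$; the bound $j+km\le |\ZB(G)|(\diam\MC(S,\beta)+1)-1$ comes out identically.
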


\pf
Let $e:=|\ZB(G)|$. Since $\mathrm{Ker}(\chi^e)$ contains $\ZB(G)$ but not $G$, we can find a nontrivial $\beta \in \Irr(S)$ such that $\beta$ inflated to
$G$ is an irreducible constituent of $\chi^e$. Now consider arbitrary $\varphi,\psi \in \Irr(G)$. Then there is $0 \leq i \leq e-1$ such that
the nontrivial character $\varphi\chi^i\overline\psi$ is trivial at $\ZB(G)$ and so contains a nontrivial $\delta \in \Irr(S)$. Thus
$$[\varphi\chi^i\overline\delta,\psi]_G = [\varphi\chi^i\overline\psi,\delta]_G > 0.$$
Next, we can find some $d \leq \diam \MC(S,\beta)$ such that $\beta^d$ contains $\overline\delta$. It follows that
$$[\varphi\chi^{i+de},\psi]_G \geq [\varphi\chi^i\beta^d,\psi]_G \geq [\varphi\chi^i\overline\delta,\psi]_G > 0,$$
i.e. a directed path of length $i+de$ connects $\varphi$ to $\psi$ in $\MC(G,\a)$.
\hal

As a final remark, we note that one cannot remove the term $|\ZB(G)|$ from the upper bound in Theorem \ref{qs}. Indeed, any directed path
connecting $1_G$ to any other $1_S \neq \psi \in \Irr(S)$ in $\MC(G,\a)$ must have length divisible by $|\ZB(G)|$.



\end{document}